\theoremstyle{plain}
\newtheorem{theorem}{Theorem}[section]
\newtheorem{lemma}[theorem]{Lemma}
\newtheorem{definition-theorem}[theorem]{Definition-Theorem}
\newtheorem{proposition}[theorem]{Proposition}
\newtheorem{corollary}[theorem]{Corollary}
\newtheorem{definition}[theorem]{Definition}
\theoremstyle{definition}
\newtheorem{example}[theorem]{Example}
\newcommand{\bbr}{\mathbb{R}}
\newcommand{\bbc}{\mathbb{C}}
\newcommand{\bbn}{\mathbb{N}}
\newcommand{\wt}{\widetilde}
\newcommand{\vocab}[1]{\textit{#1}}
\DeclareMathOperator{\vspan}{\mathrm{Span}}
\DeclareMathOperator{\Ai}{{\mathrm{Ai}}}
\DeclareMathOperator{\Bi}{\mathrm{Bi}}
\DeclareMathOperator{\ord}{\mathrm{ord}}
\DeclareMathOperator{\cord}{\mathrm{cord}}
\DeclareMathOperator{\DGal}{\mathrm{DGal}}
\DeclareMathOperator{\sym}{\mathrm{sym}}
\DeclareMathOperator{\shom}{\mathscr{H}\text{\kern -3pt {\calligra\large om}}\,}
\begin{document}
\title[Algebras of commuting differential operators
for kernels of Airy type]
{Algebras of commuting differential operators \\
for integral kernels of Airy type}
\dedicatory{To the memory of Harold Widom, with admiration}
\author[W. Riley Casper]{W. Riley Casper}
\address{
Dept of Mathematics \\
California State University Fullerton\\
Fullerton, CA 92831\\
U.S.A.
}
\email{wcasper@fullerton.edu}

\author[F. Alberto Gr\"unbaum]{F. Alberto Gr\"unbaum}
\address{
Dept of Mathematics \\
University of California, Berkeley \\
Berkeley, CA 94720 \\
U.S.A.
}
\email{grunbaum@math.berkeley.edu}

\author[Milen Yakimov]{Milen Yakimov}
\address{
Dept of Mathematics \\
Northeastern University \\
Boston, MA 02115 \\
U.S.A.
}
\email{m.yakimov@northeastern.edu}

\author[Ignacio Zurri\'an]{Ignacio Zurri\'an}
\address{
Dept of Mathematics \\
Universidad Nacional de C\'ordoba, C\'ordoba, X5016HUA, Argentina
}
\email{ignacio.zurrian@fulbrightmail.org}
\begin{abstract}
Differential operators commuting with integral operators were discovered in the work of C. Tracy and H. Widom  \cite{TW1,TW2} and used to derive asymptotic expansions of the Fredholm determinants of integral operators arising in random matrix theory. Very recently, it has been proved that all rational, symmetric Darboux transformations of the Bessel, Airy, and exponential bispectral functions give rise to commuting integral and differential operators \cite{CGYZ1,CGYZ2,CY}, vastly generalizing the known examples in the literature. In this paper, we give a classification of the the rational symmetric Darboux transformations of the Airy function in terms of the fixed point submanifold of a differential Galois group acting on the Lagrangian locus of the (infinite dimensional) Airy Adelic Grassmannian and initiate the study of the full algebra of differential operators commuting with each of the integral operators in question. We leverage the general theory of \cite{CY} to obtain explicit formulas 
for the two differential operators of lowest orders that commute with each of
the level one and two integral operators obtained in the Darboux process.
Moreover, we prove that each pair of differential operators commute with each other.
The commuting operators in the level one case are shown to satisfy an algebraic relation defining an elliptic curve.
\end{abstract}
\maketitle
\section{Introduction}
Our contribution to this volume bears a connection with a phenomenon uncovered by Craig Tracy and Harold Widom \cite{TW2} in their work on level spacing in Random Matrix Theory.
For a double scaling limit at the ``edge of the spectrum'' they observed that the resulting integral operator with the Airy kernel acting on an appropriate interval admits a 
commuting second order differential operator. This highly exceptional fact is put to good use in section IV of their paper where a number of asymptotic results 
for several quantities of interest are given.

In the context of Random Matrix Theory the existence of such a commuting pair of operators had been exploited earlier, for instance in work by M. Mehta \cite{Mehta}
and W. Fuchs \cite{Fuchs}. In this case one is interested in the ``bulk of the spectrum'' and the role of the Airy kernel is taken up by the more familiar sinc kernel. 
Both of these situations deal with the Gaussian Unitary Ensemble.

The consideration of either the Laguerre or the Jacobi  ensembles at the ``edge of the spectrum'' gives rise to the Bessel kernel.
This case, as well as the corresponding commuting pair of integral-differential operators is considered by C. Tracy and H. Widom in \cite{TW1}.
There, once again, this exceptional fact is exploited in section III to derive a number of important asymptotic results.

In this paper we concentrate on the ``exceptional fact'' mentioned above in three different situations relevant to Random Matrix theory. This fact
had emerged in other areas of mathematics. In a ground-breaking collection of papers by D. Slepian, H. Landau and H. Pollak done at Bell labs in the 1960's
\cite{LP1,LP2,S,SV,S1,S2,SP} instances of this phenomenon were discovered and used in a key way in communication-signal processing theory.
In fact, some precedents can be traced further back, see \cite{bateman,Ince}. 
For an up-to-date treatment of the numerical issues involving the prolate spheroidal function, one can see \cite{ORX}. 

Incidentally in the case of the Bessel kernel the existence of a commuting operator was already proved by D. Slepian, while the situation of the Airy kernel appears for
the first time in C. Tracy and H. Widom's paper mentioned above.  The so called ``prolate spheroidal wave functions,'' which arise in the case of the sinc kernel and 
their corresponding integral-differential pair of operators, have played an important role in areas far removed from signal processing that motivated
the research of Slepian and collaborators.
We give only two instances of this, but we are sure that other people can provide other examples: the paper by J. Kiukas and R. Werner \cite{KW} 
in connection with Bell's inequalities,
and the program by A. Connes in connection with the Riemann hypothesis with C. Consani, M. Marcolli and H. Moscovici \cite{CC,CM,CMo}. 
 
One should also mention that the Airy function itself and variants of it have played an important role in other very active areas of current research, 
such as quantum gravity and intersection theory on moduli space of curves, see \cite{Kon,Wit}. 

In all the three instances discussed above (namely the sinc, Bessel and Airy kernels), the commuting differential operator has been found by a direct computation that relies heavily on integration by parts.
The interest in understanding and extending this exceptional phenomenon in a variety of other situations has produced some few more examples, see
\cite{BG,CG,G0,G1,G2,GLP,GPZ2}.

The bispectral problem formulated 1986 in \cite{DG} aimed at a conceptual understanding of the phenomenon of integral operators 
admitting a commuting differential operator. The idea is that all known kernels with this property are built from bispectral functions, that 
is functions in two complex variables that are eigenfunctions of differential operators in each of them. There has been a substantial 
amount of research on this problem \cite{GH,Iliev}, which started with the classification of all bispectral differential operators of second order \cite{DG} and 
culminated in the classification of bipectral functions of rank 1 in \cite{Wilson1} 
and the construction of bispectral functions of arbitrary rank via Darboux transformations \cite{BHY2,KR} and automorphisms of the first Weyl algebra \cite{BHY1,BHY3}. 

Since the mid 80s, the belief that the two problems, bispectrality and the existence of a commuting pair made up of a differential and an integral operator were closely
connected has been driving research on both fronts. However, for a long time there no general argument proving that bispectral functions give rise to kernels 
of integral operators with the commutativity property. This was finally settled in \cite{CY} where it was proved to be the case 
for self-adjoint bispectral functions of rank 1 and 2.

More recently we proved in \cite{CGYZ1,CGYZ2} that all bispectral functions of rank 1 give rise to integral operators that reflect a differential operator
rather than plain commute with it.  

All of the previous results on integral operators address the construction of a single differential operator commuting with it. 
The purpose of this paper is to initiate the systematic study of the \emph{algebras} of differential operators that commute with a given integral operator.
We start with the Airy example considered by C. Tracy and H. Widom and consider all self-adjoint bispectral Darboux transformations. This is an infinite 
dimensional manifold which sits canonically in the infinite dimensional Grassmannian of all Darboux transformation from the Airy function, obtained from 
factorizations of polynomials of the Airy operator 
\begin{equation}
\label{Airy-oper}
L(x, \partial_x) = \partial^2 -x.
\end{equation}
We give a conceptual classification of the former manifold as the fixed point set of a Lagrangian Grassmannian with respect to the canonical
action of the associated differential Galois group. The Lagrangian Grassmannian in question is the sub-Grassmannian with 
respect to a canonical symplectic form. We consider the first two instances of self-adjoint bispectral Darboux transformations coming 
from factorizations of 
\[
(L-t_1)^2 \quad \mbox{and} \quad (L- t_2)^4
\]
of the form $P^* P$ for a differential operator $P(x, \partial_x)$ with rational coefficients. The corresponding bispectral functions, referred to here as \vocab{level one} and \vocab{level two} bispectral functions, are significantly more complicated than the bispectral Airy function $\mathrm{Ai}(x+z)$.
The integral operators that they give rise to depend on parameters classifying different factorizations.
For each integral operator, we compute explicitly the differential operators of the lowest two orders and prove that they are algebraically dependent.
In the level one situation, the commuting operators have order $4$ and $6$.  They generate the algebra of all differential operators commuting with the integral operator and satisfy an algebraic relation which happens to be an elliptic curve.  In the level two situation, the lowest two commuting operators have order $10$ and $12$. However, we are also able to find commuting operators of order $14$, $16$, and $18$ and to prove that these differential operators commute with each other.  In a future publication, we will return to the problem of studying algebras of differential operators commuting with a fixed integral operator and will present general structural results for the algebra of differential operator commuting with all integral operators which are built from bispectral functions, and which are motivated by the examples in this paper.

This paper is written as a small token of admiration and gratitude to the amazing mathematical work of Harold Widom.
Widom started mathematical life as an algebraist working with Irving Kaplansky at Chicago, before becoming mainly an analyst through the influence of Mark Kac at Cornell.
This paper uses tools from both analysis and algebra, uniting Widom's dual mathematical history.
His influence will be a lasting one, and we will miss him badly.
\section{Bispectral functions, Fourier algebras and prolate spheroidal type commutativity}
\label{sec2}
\subsection{Bispectrality and Fourier Algebras}
For an open subset  $U \subseteq \bbc$, denote by $\mathfrak D(U)$ the algebra of differential operators on $U$ with meromorphic coefficients.
\begin{definition} \cite{DG}
Let $U$ and $V$ be two domains in $\bbc$. A nonconstant meromorphic function $\Psi(x,z)$ defined on $U\times V \subseteq \bbc^2$
is called {\em{bispectral}} if there exist differential operators 
$B(x, \partial_x) \in\mathfrak D(U)$ and $D(z, \partial_z) \in\mathfrak D(V)$ such that
\begin{align*}
B(x, \partial_x) \Psi(x,z) &= g(z)\Psi(x,z)
\\
D(z, \partial_z) \Psi(x,z) &= f(x)\Psi(x,z)
\end{align*}
for some nonconstant functions $f(x)$ and $g(z)$ meromorphic on $U$ and $V$, respectively.
\end{definition}
Denote by $\Ai(x)$ the classical Airy function. The function
\[
\Psi_{\Ai}(x,z) := \Ai (x+ z)
\]
 is bispectral because
\begin{equation}
\label{Airyu-bisp}
L(x, \partial_x) \Psi_{\Ai}(x,z)  = z \Psi_{\Ai}(x,z) \quad \mbox{and} \quad
L(z, \partial_z) \Psi_{\Ai}(x,z)  = x \Psi_{\Ai}(x,z), 
\end{equation}
where $L(x, \partial_x)$ is the Airy operator \eqref{Airy-oper}. 
The differential equations satisfied by a bispectral function are captured by the following definition.
\begin{definition} \cite{BHY1}
Let $\Psi(x,z)$ be a bispectral meromorphic function defined on $U\times V \subseteq \bbc^2$. 
Define the {\em{left}} and {\em{right Fourier algebras}} of differential operators for $\Psi$ by
\begin{align*}
{\mathfrak{F}}_x(\Psi) = \{R(x, \partial_x) \in \mathfrak D(U) : \, &\text{there exists a differential operator $S(z, \partial_z) \in \mathfrak D(V)$}  \\
&\text{satisfying $R(x, \partial_x)\Psi(x,z) =  S(z, \partial_z) \Psi(x,z)$}\}
\end{align*}
and 
\begin{align*}
{\mathfrak{F}}_z(\Psi) = \{S(z, \partial_z) \in \mathfrak D(V) : \, &\text{there exists a differential operator $R(x, \partial_x) \in \mathfrak D(U)$}  \\
&\text{satisfying $R(x, \partial_x) \Psi(x,z) = S(z, \partial_z) \Psi(x,z)$}\}.
\end{align*}
\end{definition}

By \cite[Proposition 2.4]{CY}, for every bispectral meromorphic function $\Psi : U \times V \to \bbc$, 
there exists a canonical anti-isomorphism 
\[
b_\Psi : {\mathfrak{F}}_x(\Psi)  \to {\mathfrak{F}}_z(\Psi), 
\]
given by $b_\Psi(R(x, \partial_x)) = S(z, \partial_z)$ if 
\[
R(x, \partial_x)\Psi(x,z) =  S(z, \partial_z) \Psi(x,z).
\]
We call this the {\em{generalized Fourier map}} associated to $\Psi(x,z)$. 
Define the {\em{co-order}} of an element $R(x, \partial_x) \in  {\mathfrak{F}}_x(\Psi)$ by
\[
\cord R := \ord b_\Psi(R). 
\] 
Analogously, we define the co-order of $S(z, \partial_z) \in  {\mathfrak{F}}_z(\Psi)$ by $\cord S := \ord b_\Psi^{-1}(S)$.
The Fourier algebras of $\Psi(x,z)$ have natural $\bbn \times \bbn$-filtrations:
\begin{align*}
&{\mathfrak{F}}_x(\Psi)^{\ell, m} = \{ R(x, \partial_x) \in {\mathfrak{F}}_x(\Psi) : \, \ord R \leq \ell, \cord R \leq m \},  \\
&{\mathfrak{F}}_z(\Psi)^{m, \ell} = \{ S(z, \partial_z) \in {\mathfrak{F}}_z(\Psi) : \, \ord S \leq m, \cord S \leq \ell \}, 
\end{align*}
where $\bbn = \{0,1,\ldots\}$ and $b_\Psi({\mathfrak{F}}_x(\Psi)^{\ell, m}) = {\mathfrak{F}}_z(\Psi)^{m, \ell}$.
The commutative algebras
\[
{\mathfrak{B}}_x(\Psi) := \bigcup_{\ell \geq 0} {\mathfrak{F}}_x(\Psi)^{\ell, 0} \quad \mbox{and} \quad 
{\mathfrak{B}}_z(\Psi) := \bigcup_{m \geq 0} {\mathfrak{F}}_z(\Psi)^{0, m} 
\]
are precisely the algebras of differential operators in $x$ and $z$, respectively, for which $\Psi(x,z)$ 
is a eigenfunction. 

\begin{example} 
\label{ex:Airy}
The Airy bispectral function $\Psi_{\Ai}(x,z)$ satisfies 
\begin{align*}
L(x, \partial_x) \Psi_{\Ai}(x,z) &= z \Psi_{\Ai}(x,z),  \\
\partial_x \Psi_{\Ai}(x,z) &= \partial_z \Psi_{\Ai}(x,z), \\
x \Psi_{\Ai}(x,z) &= L(z, \partial_z) \Psi_{\Ai}(x,z).
\end{align*}
The Fourier algebras ${\mathfrak{F}}_x(\Psi_{\Ai})$ and ${\mathfrak{F}}_z(\Psi_{\Ai})$ coincide with the first Weyl algebra in the variables $x$ and $z$, respectively, 
and the generalized Fourier map $b_{\Psi_{\Ai}}$ is the anti-isomorphism from the first Weyl algebra in $x$ to the first Weyl algebra in $z$ given by
\[
b_{\Psi_{\Ai}} (x) = \partial_z^2 -z, \; \; b_{\Psi_{\Ai}} (\partial_x) = \partial_z.
\]
Furthermore,
\[
\dim {\mathfrak{F}}_x(\Psi_{\Ai})^{2 \ell, 2 m} = \ell m + \ell + m + 1,
\]
see \cite[Sect. 3.1 and Lemma 5.5]{CY}.
On the level of Wilson's adelic grassmannian, the anti-isomorphism $b_\psi$ is equivalent to Wilson's bispectral involution \cite{Wilson1}.
More generally, every anti-automorphism of the first Weyl algebra determines a bispectral function
as proved in \cite{BHY3}.
\end{example}
\begin{definition} 
\label{def:ratDarbAi}
A rational Darboux transformation from the bispectral Airy function $\Psi_{\Ai}(x,z)$ is a function of the form 
\begin{equation}
\label{bispDarb1}
\Psi(x,z) := \frac{P(x, \partial_x) \Psi_{\Ai}(x,z)}{q(z) p(x)} 
\end{equation}
such that 
\begin{equation}
\label{bispDarb2}
\Psi_{\Ai}(x,z) = Q(x, \partial_x) \frac{\Psi(x,z)}{\widetilde q(z) \widetilde p(x)} 
\end{equation}
for some differential operators $P$ and $Q$ with polynomial coefficients and polynomials $p(x)$, $\widetilde p(x)$, 
$q(z)$ and $\widetilde q (z)$  with coefficients in $\bbc$.
We define the {\em{bidegree}} of such a transformation to be the pair $(\ord P, \cord P)$.   
\end{definition}
In this setting we have $Q, P \in {\mathfrak{F}}_x(\Psi_{\Ai})$, $\widetilde p(x), p(x) \in {\mathfrak{F}}_x(\Psi_{\Ai})^{0, m}$ and 
$\widetilde q(x), q(x) \in {\mathfrak{F}}_z(\Psi_{\Ai})^{0,\ell}$ for some $\ell, m \in \bbn$. Furthermore,  
eqs. \eqref{bispDarb1}--\eqref{bispDarb2} imply that 
\[
Q(x, \partial_x) \frac{1}{\widetilde p(x) p(x)} P(x, \partial_x) \Psi_{\Ai}(x,z) = {\widetilde q(z) q(z)} \Psi_{\Ai}(x,z),
\]
and thus by Example \ref{ex:Airy}, 
\begin{equation}
\label{QP}
Q(x, \partial_x) \frac{1}{\widetilde p(x) p(x)} P(x, \partial_x) = \widetilde q(L(x, \partial_x)) q(L(x, \partial_x)).
\end{equation}
\begin{theorem} \cite{BHY1,BHY3,KR} All rational Darboux transformations of the bispectral Airy function $\Psi_{\Ai}(x,z)$ are bispectral functions. More precisely, 
if $\Psi(x,z)$ is as in Definition \ref{def:ratDarbAi}, then it satisfies the spectral equations
\begin{align*}
\frac{1}{p(x)} P(x, \partial_x) Q(x, \partial_x)  \frac{1}{\wt{p}(x)} \Psi(x,z) &= q(z) \wt{q}(z) \Psi(x,z), \\
\frac{1}{q(z)} b_{\Psi_{\Ai}}(P)(z, \partial_z) b_{\Psi_{\Ai}}(S)(z, \partial_z)  \frac{1}{\wt{q}(z)} \Psi(x,z) & = p(x) \wt{p}(x) \Psi(x,z).
\end{align*} 
\end{theorem}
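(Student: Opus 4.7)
The plan is to prove the two spectral equations in parallel: the first by direct manipulation of the defining relations \eqref{bispDarb1}--\eqref{bispDarb2}, the second by an analogous computation in the dual variable using the Fourier anti-isomorphism $b_{\Psi_{\Ai}}$.

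\emph{First equation.} Since $\wt q(z)$ commutes past the $x$-operator $Q(x,\partial_x)$, equation \eqref{bispDarb2} rearranges to $Q\frac{\Psi}{\wt p} = \wt q\,\Psi_{\Ai}$. Applying $P(x,\partial_x)$, which also commutes past $\wt q(z)$, yields $PQ\frac{\Psi}{\wt p} = \wt q \cdot P\Psi_{\Ai}$, and substituting $P\Psi_{\Ai} = q p\,\Psi$ from \eqref{bispDarb1} gives $PQ\frac{\Psi}{\wt p} = \wt q\, q\, p\,\Psi$. Dividing by $p(x)$ finishes the first equation.

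\emph{Second equation.} By the defining property of $b_{\Psi_{\Ai}}$, we have $\ol P\Psi_{\Ai} = P\Psi_{\Ai}$ and $\ol Q\Psi_{\Ai} = Q\Psi_{\Ai}$, so \eqref{bispDarb1} may be rewritten as $\Psi = \frac{\ol P\Psi_{\Ai}}{q p}$. I plan to establish the dual ``inverse Darboux'' identity
\[
\ol Q \frac{\Psi}{\wt q\,\wt p} = \Psi_{\Ai}, \qquad (\star)
\]
from which, pulling $\wt p(x)$ past the $z$-operator $\ol Q$, we obtain $\ol Q\frac{\Psi}{\wt q} = \wt p\,\Psi_{\Ai}$. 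Applying $\ol P$ to both sides, using $\ol P\Psi_{\Ai} = q p\,\Psi$ again, and dividing by $q(z)$ then exactly mirrors the first equation's proof and produces the second spectral equation.

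\emph{Main obstacle.} The crux is establishing $(\star)$. The plan is to apply $b_{\Psi_{\Ai}}$ to the operator identity \eqref{QP}. Since $b_{\Psi_{\Ai}}$ reverses composition and sends $L(x,\partial_x)\mapsto z$ and $x\mapsto L(z,\partial_z)$, equation \eqref{QP} transforms into the dual identity
\[
\ol P \cdot \frac{1}{p(L(z,\partial_z))\,\wt p(L(z,\partial_z))} \cdot \ol Q \;=\; q(z)\,\wt q(z)
\]
in an appropriate localization of the Weyl algebra in $z$. Multiplying this identity on the right by $\frac{1}{\wt q(z)}$ and applying to $\Psi$, the left-hand side becomes $\ol P\bigl[\frac{1}{p(L)\wt p(L)}\, \ol Q \frac{\Psi}{\wt q}\bigr]$, while the right-hand side equals $q\,\Psi = \ol P\frac{\Psi_{\Ai}}{p}$ (using $\ol P\Psi_{\Ai} = q p\,\Psi$ and the commutation of $\frac{1}{p(x)}$ with $\ol P$). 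The hardest technical step --- which requires the effective invertibility of $\ol P$ on the relevant function space, as obtained from the $\bbn\times\bbn$-filtration structure of the Fourier algebras and the Darboux construction in \cite{BHY1,BHY3,KR} --- is to conclude that the two inputs to $\ol P$ coincide as functions, not merely modulo $\ker \ol P$. Multiplying that equality by $p(L)\wt p(L)$ and using $L\,\Psi_{\Ai} = x\,\Psi_{\Ai}$ then produces $(\star)$.
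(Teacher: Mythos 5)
Your proof of the first spectral equation is correct: rewrite \eqref{bispDarb2} as $Q\frac{\Psi}{\wt p} = \wt q\Psi_{\Ai}$, apply $P$, substitute $P\Psi_{\Ai} = q p\,\Psi$ from \eqref{bispDarb1}, and divide by $p(x)$. Your reduction of the second spectral equation to the dual inverse-Darboux relation $(\star)$, $\ol{Q}\frac{\Psi}{\wt q\,\wt p}=\Psi_{\Ai}$, is also correct. Note that the paper itself does not prove this theorem but cites \cite{BHY1,BHY3,KR}, so there is no internal proof to compare against; the question is whether your argument actually closes $(\star)$.

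It does not, and the gap is substantive. After pushing \eqref{QP} through the Ore-localized extension of $b_{\Psi_{\Ai}}$ you obtain the identity $\ol{P}\,\frac{1}{p(L(z,\partial_z))\wt p(L(z,\partial_z))}\,\ol{Q}=q(z)\wt q(z)$, which is fine as an identity in the localized Weyl algebra. But applying it to the function $\Psi$ is where the argument breaks: $\frac{1}{p(L(z,\partial_z))\wt p(L(z,\partial_z))}$ is not a differential operator, so its action on $\ol{Q}\frac{\Psi}{\wt q}$ is defined only modulo $\ker\bigl(p(L)\wt p(L)\bigr)$; and even granting a choice of branch, the resulting equation $\ol{P}u=\ol{P}v$ (with $v=\Psi_{\Ai}/p$) determines $u$ only up to an element of the form $\sum_i c_i(x)k_i(z)$, $k_i$ a basis of $\ker\ol{P}$, with \emph{arbitrary} $x$-dependent coefficients $c_i(x)$ since $u,v$ are functions of $(x,z)$. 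Ruling out this ambiguity is exactly where the real work of \cite{BHY1,BHY2,KR} lies --- it requires either a growth/asymptotic argument using the decay of $\Psi_{\Ai}$ on sectors, or the kernel-space (adelic Grassmannian) description of the Darboux data, neither of which your sketch supplies. You flag this as ``the hardest technical step'' and defer to the references, which is honest, but it means the second equation is asserted rather than proved.
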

\subsection{Prolate Spheroidal Type Commutativity}
A rational Darboux transformation $\Psi(x,z)$ of the bispectral Airy function of bidegree $(d_1, d_2)$ 
is called {\em{self-adjoint}} if it has a presentation as in Definition \ref{def:ratDarbAi} 
such that 
\[
Q(x, \partial_x) = P^*(x, \partial_x)
\]
and $\widetilde p(x) = p(x)$, $\widetilde q(z) = q(z)$. Here $P \mapsto P^*$ denotes the formal adjoint.
It follows from \eqref{QP} that $P$ has even order.
A rational Darboux transformation $\Psi(x,z)$ of the Airy bispectral function $\Psi_{\Ai}(x,z)$ is self-adjoint if and only if 
the spectral algebras ${\mathfrak{B}}_x(\Psi)$ and ${\mathfrak{B}}_z(\Psi)$ are preserved under the formal adjoint, 
and this condition is satisfied if and only if $\Psi(x,z)$ is an eigenfunction of nonconstant, formally symmetric differential operators
in $x$ and $z$ (i.e., operators that are fixed by the formal adjoint), see \cite[Remark 3.17 and Proposition 3.18]{CY}.

For self-adjoint rational Darboux transformations $\Psi(x, z)$ of $\Psi_{\Ai}(x,z)$, both Fourier algebras 
${\mathfrak{F}}_x(\Psi)$ and ${\mathfrak{F}}_z(\Psi)$ are preserved under the formal adjoint and 
\begin{equation}
\label{ba}
(b_\Psi (R))^* = b_\Psi(R^*) \quad \mbox{for all} \quad R \in {\mathfrak{F}}_x(\Psi), 
\end{equation}
see \cite[Proposition 3.24 and 3.25]{CY}. Define
\[
{\mathfrak{F}}_{x, \sym}(\Psi) := \{ R \in {\mathfrak{F}}_{x}(\Psi) : \, R^* = R \}.
\]
By \eqref{ba} for all $R \in {\mathfrak{F}}_{x, \sym}(\Psi)$, 
\[
(b_{\Psi} (R))^* = b_{\Psi} R. 
\]
\begin{example} \cite[Lemma 5.5]{CY} 
\label{ex:Airy2}
For all $\ell, m \in \bbn$, ${\mathfrak{F}}_{x, \sym}^{2\ell,2m}(\Psi_{\Ai})$ has a basis given by
\[
\{ L(x, \partial_x)^j x^k + x^k L(x, \partial_x)^j : 0\leq j\leq l,\ 0\leq k\leq m\},
\]
and in particular,
\[
{\mathfrak{F}}_{x, \sym}^{2\ell,2m}(\Psi_{\Ai}) = (\ell + 1)(m + 1).
\]
\end{example}

For $\epsilon>0$ consider the sector
\[
\Sigma_\epsilon = \{re^{i\theta}\in\bbc: r> 0,\ |\theta| < \pi/6-\epsilon\}.
\]
The Airy function $\Ai(x)$ of the first kind
is holomorphic on this domain and has the asymptotic expansion
\[
\Ai(x) = e^{-\frac{2}{3}x^{3/2}}\Bigg(\sum_{j=1}^\infty c_j x^{-j/4}\Bigg)
\]
for some $c_j\in\bbr$ where $x^{1/4}$ is interpreted as the principal $4$th root of $x$.
Furthermore, any rational Darboux transformation of $\Psi_{\Ai}(x,z)$ equals
$\Psi(x,z) = \frac{1}{p(x)q(z)} P(x, \partial_x)\Psi_{\Ai}(x,z)$ for some polynomials $p(x),q(z)$ and a differential operator $P(x, \partial_x)$ with polynomial coefficients.
Thus, for any bispectral Darboux transformation of $\Psi_{\Ai}(x,z)$ we have the asymptotic estimate
\[
\|\partial_x^j\partial_z^k \Psi(x,z)\| = e^{-\frac{2}{3}(x+z)^{3/2}}\mathcal O((|x|+|z|)^{(j+k)/2+m})
\]
on $\Sigma_\epsilon$ for some integer $m$. 
The transformation $z\mapsto (2/3)z^{3/2}$ sends $\Sigma_\epsilon$ to the sector $\{re^{i\theta}\in\bbc: r>0,\ |\theta| < \pi/4-3\epsilon/2\}$.
Therefore if $\Gamma_1,\Gamma_2\subseteq\Sigma_\epsilon$ are smooth, semi-infinite curves inside this domain with parametrizations 
$\gamma_i(t): [0,\infty)\rightarrow\bbc$ then the real part of $-2(\gamma_1(t)+\gamma_2(s))^{3/2}/3$ will go to $-\infty$ as $t\rightarrow\infty$ or $s\rightarrow\infty$.
The above asymptotic estimate now shows that $\Psi(x,z)$ satisfies
\[
\int_{\Gamma_1} |x^mz^n\partial_x^j\partial_z^k  \Psi(x,z)| dx\in L^\infty(\Gamma_2) 
\quad
\mbox{and} 
\quad
\int_{\Gamma_2} |x^mz^n\partial_x^j\partial_z^k  \Psi(x,z)| dz\in L^\infty(\Gamma_1),
\]
for every pair of smooth, semi-infinite curves $\Gamma_1,\Gamma_2\subseteq\Sigma_\epsilon$.

Recall that the {\em{bilinear concomitant}} of a differential operator
\[
R(x, \partial_x) = \sum_{j=0}^m d_j(x)\partial_x^j.
\]
is the bilinear form $\mathcal{C}_{R}(-,-; p)$ defined on pairs of functions $f(x),g(x)$, which are analytic at $p \in \bbc$  by
\begin{align*}
\mathcal{C}_{R}(f,g;p)
  & = \sum_{j=1}^m \sum_{k=0}^{j-1} (-1)^k f^{(j-1-k)}(x)(d_j(x)g(x))^{(k)}|_{x=p}\\
  & = \sum_{j=1}^m \sum_{k=0}^{j-1}\sum_{\ell=0}^k\binom{k}{\ell} (-1)^k f^{(j-1-k)}(x)d_j(x)^{(k-\ell)}g(x)^{(\ell)}|_{x=p}.
\end{align*}
See for example \cite[Chapter 5, Section 3]{Ince}.

\begin{theorem} \cite{CY} Let  $\Psi(x,z)$ be a self-adjoint bispectral Darboux transformation of the Airy bispectral function $\Psi_{\Ai}(x,z)$
of bidegree $(d_1, d_2)$ and let $\Gamma_1$ 
and $\Gamma_2$ be two semi-infinite, smooth curves in $\Sigma_\epsilon$ for some $\epsilon > 0$, whose finite endpoints are $t_1$ and $t_2$, respectively.
Assume moreover that $\Psi(x,z)$ is holomorphic in a neighborhood of $\Gamma_1\times\Gamma_2$ and that the operators in ${\mathcal{F}}_x(\Psi)$ and ${\mathcal{F}}_z(\Psi)$ 
have holomorphic coefficients in a neighborhood of $\Gamma_1$ and $\Gamma_2$, respectively. Then the following hold:
\begin{enumerate}
\item $\dim {\mathfrak{F}}_{x, \sym}^{2\ell,2m}(\Psi)\geq (\ell + 1)(m + 1) + 1 - d_1d_2$.
\item If a differential operator $S(z, \partial_z) \in {\mathfrak{F}}_{z, \sym}(\Psi)$ satisfies
\[
\mathcal{C}_{S}(-,-;t_1) \equiv 0 \quad \mbox{and} \quad \mathcal{C}_{b_{\Psi}^{-1}(S) }(-,-;t_2) \equiv 0,
\]
then it commutes with the integral operator
\[
T: f(z)\mapsto \int_{\Gamma_1} K(z,w)f(w)dw \quad \text{with kernel} \quad K(z,w) = \int_{\Gamma_2} \Psi(x,z) \Psi(x,w) dx.
\]
\item If $\dim\mathfrak{F}_{z,\sym}^{2\ell,2\ell} \geq \ell(\ell+1) + 2$, in particular if $\ell=d_1d_2$, then there exists a differential operator $S(z, \partial_z) \in \mathfrak{F}_{z,\sym}^{\ell,\ell}(\Psi)$
of positive order satisfying the assmption and conclusion in part (2). 
\end{enumerate}
\end{theorem}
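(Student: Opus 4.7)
The theorem has three parts of distinct flavors. Part (2) is the core integration-by-parts computation in the spirit of Tracy--Widom; Part (1) is an algebraic lower bound that transplants symmetric operators from ${\mathfrak{F}}_{x,\sym}(\Psi_{\Ai})$ through the Darboux factorization; Part (3) combines these via a linear-algebra count against the concomitant conditions. I would prove them in the order (2), (1), (3).

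For Part (2), set $R := b_\Psi^{-1}(S) \in {\mathfrak{F}}_{x,\sym}(\Psi)$; this is symmetric by \eqref{ba}. Two integrations by parts suffice. First along $\Gamma_1$ in $z$, using $S = S^*$:
\[
[S,T]f(z) = \int_{\Gamma_1}\bigl[(S_z - S_w)K(z,w)\bigr]f(w)\,dw + (\text{endpoint at }t_1) + (\text{term at }\infty),
\]
where the endpoint term vanishes by $\mathcal{C}_S(-,-;t_1)\equiv 0$ and the term at infinity by the $e^{-\frac{2}{3}(x+z)^{3/2}}$ decay estimates derived before the theorem. Using $S_z\Psi(x,z) = R_x\Psi(x,z)$ and $S_w\Psi(x,w) = R_x\Psi(x,w)$,
\[
(S_z - S_w)K(z,w) = \int_{\Gamma_2}\bigl[(R_x\Psi(x,z))\Psi(x,w) - \Psi(x,z)(R_x\Psi(x,w))\bigr]\,dx,
\]
and a second integration by parts in $x$ along $\Gamma_2$ cancels the bulk (since $R = R^*$), while boundary terms at $t_2$ and infinity vanish by $\mathcal{C}_R(-,-;t_2)\equiv 0$ and the decay. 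Hence $[S,T]=0$.

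For Part (1), my plan is Darboux transplantation. Given $R \in {\mathfrak{F}}_{x,\sym}(\Psi_{\Ai})$, the operator $\Phi(R) := p(x)^{-1}PRP^{*}p(x)^{-1}$ is symmetric (since $P^{*}=Q$) and, by \eqref{QP}, satisfies $\Phi(R)\Psi = q(z)^{2}b_{\Psi_{\Ai}}(R)\Psi$, so it lies in ${\mathfrak{F}}_{x,\sym}(\Psi)$ as soon as its meromorphic coefficients are regular on $U$. Regularity at the zeros of $p$ gives linear conditions on $R$ whose total rank is controlled by $d_1 d_2$; combined with $\dim {\mathfrak{F}}_{x,\sym}^{2\ell,2m}(\Psi_{\Ai})=(\ell+1)(m+1)$ from Example \ref{ex:Airy2}, this yields the bound $(\ell+1)(m+1)+1-d_1 d_2$ after lower-order corrections are used to keep the image within the prescribed bifiltration level. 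For Part (3), apply Part (1) with $\ell=m=d_1 d_2$, so the bound reads exactly $\ell(\ell+1)+2$; then show that the two concomitant conditions cut out codimension at most $\ell(\ell+1)$ in ${\mathfrak{F}}_{z,\sym}^{2\ell,2\ell}(\Psi)$. Since the constants are in both kernels, the solution space has dimension $\geq 2$ and therefore contains a positive-order $S$.

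The main obstacle is the codimension bound in Part (3). A priori the skew-symmetric concomitant of a symmetric order-$2\ell$ operator lives in a $\binom{2\ell}{2}$-dimensional space at each endpoint, giving far too many constraints to close the count. The sharpening must exploit the bifiltration level $(2\ell,2\ell)$ together with the compatibility $(b_\Psi(R))^{*}=b_\Psi(R^*)$ from \eqref{ba}, packaging the conditions at $t_1$ (on $S$) and at $t_2$ (on $R=b_\Psi^{-1}(S)$) efficiently through the Fourier anti-isomorphism rather than imposing them independently. The parallel bookkeeping for Part (1)---tracking carefully how the Darboux regularity at the zeros of $p$ imposes at most $d_1 d_2 - 1$ independent conditions on $R$---is the secondary technical hurdle.
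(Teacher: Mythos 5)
This theorem is quoted from \cite{CY}; the paper itself states it without proof, so there is no ``paper's own proof'' to compare against line by line. With that caveat, here is an assessment of your proposal on its own merits.

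Your argument for Part~(2) is correct and is the standard Tracy--Widom double integration by parts: the self-adjointness of $S$ and $R=b_\Psi^{-1}(S)$ kills the bulk terms, the vanishing concomitants kill the finite boundary terms, and the decay estimates recorded just before the theorem kill the boundary terms at infinity. Nothing to add there.

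Part~(1) as you sketch it does not close. Plain transplantation $R\mapsto \tfrac{1}{p}PRP^{*}\tfrac{1}{p}$ sends ${\mathfrak{F}}_{x,\sym}^{2(\ell-d_1),\,2(m-d_2)}(\Psi_{\Ai})$ into ${\mathfrak{F}}_{x,\sym}^{2\ell,2m}(\Psi)$, producing only $(\ell-d_1+1)(m-d_2+1)$ dimensions. Comparing with the claimed $(\ell+1)(m+1)+1-d_1d_2$, the deficit is
\[
\bigl[(\ell+1)(m+1)+1-d_1d_2\bigr]-\bigl[(\ell-d_1+1)(m-d_2+1)\bigr]=1+\ell d_2+d_1 m+d_1+d_2-2d_1d_2,
\]
which is strictly positive once $\ell,m$ exceed $d_1,d_2$. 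So ``regularity constraints and lower-order corrections'' cannot make up the gap: one genuinely needs additional families of operators beyond the transplanted image. The paper's own level-one computation illustrates this, where the displayed decomposition of ${\mathfrak{F}}_{x,\sym}^{2\ell,2m}(\Psi_1)$ has four summands, not one: the transplanted piece $\tfrac{1}{p_1}P_1^{*}\mathfrak{F}_{x,\sym}^{2\ell-4,2m}(\Psi_{\Ai})P_1\tfrac{1}{p_1}$, a second piece $p_1\mathfrak{F}_{x,\sym}^{2,2m-4}(\Psi_{\Ai})p_1$, the extra set $\mathfrak{E}$ coming from the mixed identity \eqref{eqn:complicated}, and the constants $\mathbb{C}$. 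Your plan for Part~(1) accounts for only the first of these.

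For Part~(3) your ``main obstacle'' is a red herring, and the correct accounting is cheaper than you fear. A formally symmetric $S\in{\mathfrak{F}}_{z,\sym}^{2\ell,2\ell}$ can be written in divided form $S=\sum_{k=0}^{\ell}\partial_z^{\,k}a_k(z)\partial_z^{\,k}$, and $\mathcal{C}_S(-,-;t)\equiv 0$ is equivalent to $(z-t)^k\mid a_k(z)$ for each $k$, i.e.\ $a_k^{(j)}(t)=0$ for $0\le j<k$. That is $\sum_{k=1}^{\ell}k=\ell(\ell+1)/2$ linear conditions, not $\binom{2\ell}{2}$. Imposing the analogous $\ell(\ell+1)/2$ conditions on $R=b_\Psi^{-1}(S)$ at $t_2$ (legal by \eqref{ba}, since $R^{*}=R$) gives $\ell(\ell+1)$ conditions in total, and Part~(1) with $\ell=m=d_1d_2$ supplies $\ell(\ell+1)+2$ dimensions; subtracting and discarding the constants leaves a positive-order $S$. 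So the ``sharpening'' you were looking for is simply the correct count of the concomitant constraints, not any subtle interplay through the Fourier anti-isomorphism. Your Part~(3) is therefore recoverable once Part~(1) is repaired, but as written both the bound in Part~(1) and the codimension count in Part~(3) are asserted rather than proven.
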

As a special case of this theorem, we are able to recover the commuting integral and differential operators studied by Tracy and Widom in \cite{TW2}.
In particular, if we take $\Psi =\Psi_{\Ai}$ in the theorem, then it guarantees the existence of a differential operator of order $2$ commuting with the integral operator
$$T_{\Ai} f(z)\mapsto \int_{t_1}^\infty K_{\Ai}(z,w)f(w)dw,$$
with kernel
$$K_{\Ai}(z,w) = \int_{t_2}^\infty \Ai(x+z)\Ai(x+w)dx = \frac{\Ai'(t_2+z)\Ai(t_2+w)-\Ai(t_2+z)\Ai'(t_2+w)}{z-w}.$$
Solving the associated system of linear equations for the vanishing concomitant, we discover that the differential operator 
$$S_{\Ai}(z,\partial_z) := \partial_z(t_1-z)\partial_z + (t_2-t_1)z + z^2$$
satisfies the condition that $\mathcal{C}_{S_{\Ai}}(f,g;t_1) = 0$ for all functions $f,g$ analytic at $t_1$.
Its preimage under the generalized Fourier map
$$b_{\Psi}^{-1}(S_{\Ai}(z,\partial_z)) = \partial_x(t_2-x)\partial_x + (t_1-t_2)x + x^2$$
also satisfies the condition $\mathcal{C}_{b_{\Psi}^{-1}S_{\Ai}}(f,g;t_2)  = 0$ for all functions $f,g$ analytic at $t_2$.
Therefore the differential operator $S_{\Ai}(z,\partial_z)$ commutes with $T_{\Ai}$.
This is precisely the differential operator discovered by Tracy and Widom in \cite{TW2}.

\section{Classification of self-adjoint rational Darboux transformations of the bispectral Airy function}
In this section, we will classify the self-adjoint rational Darboux transformations of the bispectral Airy function by leveraging two tools: (1) the technology of differential Galois theory, and (2) the classification of self-adjoint Darboux transformations in terms of Lagrangian subspaces of symplectic vector spaces found in \cite{CY}.
A similar classification is performed in \cite{BHY2} using the entirely different technique of performing an explicit asymptotic analysis of Wronskians associated to subspaces of the kernel.
More explicity, in this section we wish to classify factorizations of the form
\begin{equation}\label{eqn:main factorization}
P(x,\partial_x)^*\frac{1}{p(x)^2}P(x,\partial_x) = q(L(x,\partial_x))^2
\end{equation}
where here $p$ and $q$ are polynomials and $P(x,\partial_x)$ is a differential operator with polynomial coefficients.
Without loss of generality, we take $q(z)$ to be monic so that $p(x)$ is the leading coefficient of the operator $P(x,\partial_x)$.
The associated self-adjoint rational Darboux transformation of the bispectral function $\Ai(x+z)$ is then defined by
$$\Psi(x,z) = \frac{1}{p(x)q(z)}P(x,\partial_x)\cdot \Ai(x+z).$$

\subsection{Lagrangian Subspaces and Concomitant}
We begin by recalling the classification of self-adjoint factorizations of self-adjoint differential operators found in \cite{CY}.
To begin, let $A(x,\partial_x)$ be a differential operator and recall the standard fact that the concomitant $\mathcal{C}_A(f,g;x)$ is independent of $x$ for all $f\in\ker(A)$ and $g\in\ker(A^*)$.
\begin{lemma}[\cite{Wilson2}, Section 3]
Let $A(x,\partial_x)$ be a linear differential operator.  Then the concomitant of $A$ defines a canonical nondegenerate pairing
$$\ker(A)\times\ker(A^*)\rightarrow\mathbb C,\ (f,g)\mapsto \mathcal{C}_A(f,g).$$
\end{lemma}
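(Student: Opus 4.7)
The plan is to establish the lemma in two steps: first, verify that $\mathcal{C}_A(f,g;x)$ is independent of $x$ whenever $f\in\ker A$ and $g\in\ker A^*$, so that $\mathcal{C}_A(f,g)$ is well defined as a scalar; second, prove the resulting bilinear form is nondegenerate by a local computation at a regular point of $A$.

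For constancy, I would invoke Lagrange's identity
$$g\cdot (Af)-f\cdot (A^*g)=\frac{d}{dx}\mathcal{C}_A(f,g;x),$$
which can be checked directly from the expanded formula for $\mathcal{C}_A$ given in the excerpt (e.g.\ by verifying it for each monomial $A=d_j(x)\partial_x^j$ via repeated integration by parts and then summing in $j$). When $f\in\ker A$ and $g\in\ker A^*$ the left side vanishes, so $\mathcal{C}_A(f,g;x)$ is constant on any simply connected domain where $f$, $g$, and the coefficients of $A$ are holomorphic, and therefore descends to a bilinear pairing $\ker A\times\ker A^*\to\bbc$.

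For nondegeneracy, fix a regular point $p$ where $d_m(p)\neq 0$, with $m=\ord A$. Standard ODE theory yields the jet isomorphism
$$\ker A\xrightarrow{\,\sim\,}\bbc^m,\qquad f\mapsto (f(p),f'(p),\dots,f^{(m-1)}(p)),$$
and similarly for $\ker A^*$, since $A^*$ has leading coefficient $(-1)^m d_m$. Under these identifications, the concomitant becomes a bilinear form on $\bbc^m\times\bbc^m$. Collecting terms in the expanded formula for $\mathcal{C}_A$, the coefficient of $f^{(i)}(p)\,g^{(\ell)}(p)$ equals
$$\sum_{j=i+\ell+1}^{m}(-1)^{j-1-i}\binom{j-1-i}{\ell}d_j^{(j-1-i-\ell)}(p),$$
which is empty (hence zero) for $i+\ell\geq m$ and reduces to the single term $(-1)^{\ell}d_m(p)$ on the anti-diagonal $i+\ell=m-1$. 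The matrix of the pairing is therefore supported on $\{(i,\ell):i+\ell\leq m-1\}$, and expanding along the anti-diagonal gives determinant $\pm d_m(p)^m\neq 0$, so the pairing is nondegenerate.

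The only mildly delicate point is the combinatorial bookkeeping above: namely, confirming that the sum for the coefficient of $f^{(i)}(p)g^{(\ell)}(p)$ indeed vanishes when $i+\ell\geq m$ and isolates a single nonzero term on the anti-diagonal. Everything else is routine: regular points exist densely since $d_m\not\equiv 0$, and because $\mathcal{C}_A(f,g)$ is constant on $\ker A\times\ker A^*$ it suffices to verify nondegeneracy at any one such $p$.
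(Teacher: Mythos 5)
The paper cites this lemma from Wilson \cite{Wilson2} and recalls the constancy of $\mathcal{C}_A(f,g;x)$ as a ``standard fact,'' providing no proof of its own; so there is nothing in the paper to compare against directly. Your proof is correct and follows the standard route. The constancy step via Lagrange's identity
$g\,(Af)-f\,(A^*g)=\tfrac{d}{dx}\mathcal{C}_A(f,g;x)$
is exactly right and can indeed be checked term-by-term from the paper's expanded formula for the concomitant. For nondegeneracy, your combinatorial bookkeeping holds up: with $k=j-1-i$ forced, the coefficient of $f^{(i)}(p)g^{(\ell)}(p)$ is
$\sum_{j=i+\ell+1}^{m}(-1)^{j-1-i}\binom{j-1-i}{\ell}d_j^{(j-1-i-\ell)}(p)$,
which is empty for $i+\ell\geq m$ and collapses to the single term $(-1)^{\ell}d_m(p)$ when $i+\ell=m-1$; the resulting anti-triangular matrix has determinant $\pm d_m(p)^m\neq 0$. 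The only point worth making explicit (you gesture at it) is that both jet maps are isomorphisms at a regular point because $\ker A$ and $\ker A^*$ each have dimension $m$ there and $A^*$ has leading coefficient $(-1)^m d_m(p)\neq 0$; combined with constancy of the pairing, checking nondegeneracy at one such $p$ suffices.
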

Combining this with the identity $\mathcal{C}_A(f,g) = -\mathcal{C}_{A^*}(g,f)$, we see that the concomitant restricts to a symplectic bilinear form on $\ker(A)$ when $A(x,\partial_x)$ is formally symmetric.

We will also rely on the following formula for concomitants of differential operator products.
\begin{lemma}[\cite{Wilson2}, Lemma 3.6]\label{lem:product concomitant}
Let $ A(x,\partial_x) = A_1(x,\partial_x)A_2(x,\partial_x)$.  Then
$$\mathcal{C}_A(f,g;x) = \mathcal{C}_{A_1}(A_2f,g;x) + \mathcal{C}_{A_2}(f,A_1^*g;x).$$
\end{lemma}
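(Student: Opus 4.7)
The plan is to derive the identity by applying the Lagrange identity
\[
g \cdot R f - f \cdot R^* g = \frac{d}{dx}\mathcal{C}_R(f,g;x),
\]
which I would first verify directly from the defining formula by a term-by-term telescoping argument: writing $\mathcal{C}_R = \sum_{j \geq 1} Q_j$ with $Q_j = \sum_{k=0}^{j-1}(-1)^k f^{(j-1-k)}(d_j g)^{(k)}$, the telescoping in $k$ collapses $Q_j'$ to $f^{(j)} d_j g - (-1)^j f (d_j g)^{(j)}$, and summing over $j$ reproduces $g \cdot Rf - f \cdot R^* g$.

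Applying this identity to $A_1$ with the pair $(A_2 f, g)$ and to $A_2$ with the pair $(f, A_1^* g)$ and adding the two, the cross terms $(A_2 f)(A_1^* g)$ cancel, producing
\[
g \cdot A f - f \cdot A^* g = \frac{d}{dx}\Bigl[\mathcal{C}_{A_1}(A_2 f, g; x) + \mathcal{C}_{A_2}(f, A_1^* g; x)\Bigr].
\]
Since the left-hand side equals $\frac{d}{dx}\mathcal{C}_A(f,g;x)$ by the Lagrange identity applied to $A = A_1 A_2$ itself, the difference
\[
D(x) := \mathcal{C}_A(f,g;x) - \mathcal{C}_{A_1}(A_2 f, g; x) - \mathcal{C}_{A_2}(f, A_1^* g; x)
\]
satisfies $D'(x) \equiv 0$ for every pair of analytic functions $f, g$.

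The main obstacle is promoting $D'(x) \equiv 0$ to $D(x) \equiv 0$, since the Lagrange identity only pins each concomitant down up to an integration constant. To handle this I observe that $D(x)$ is a universal bilinear expression
\[
D(x) = \sum_{k,l \geq 0} C_{k,l}(x)\, f^{(k)}(x)\, g^{(l)}(x)
\]
in the derivatives of $f$ and $g$, supported on $k + l \leq \ord(A) - 1$, with each $C_{k,l}(x)$ a polynomial in the coefficients of $A_1, A_2$ and their derivatives. Because $D'(x) = 0$ must hold for arbitrary analytic $f, g$ whose derivatives at any fixed point may be prescribed independently, matching coefficients of $f^{(k)} g^{(l)}$ in the expansion of $D'(x)$ yields the recurrence
\[
C_{k,l}'(x) + C_{k-1,l}(x) + C_{k,l-1}(x) = 0
\]
with the convention $C_{k,l} \equiv 0$ when an index is negative. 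Starting from the boundary condition $C_{k,l} \equiv 0$ for $k + l \geq \ord(A)$, a descending induction on $k + l$ sweeps each anti-diagonal from the corner $(\ord(A) - 1, 0)$ to $(0, \ord(A) - 1)$, using the recurrence (with $C_{k,l}' = 0$ at the level just above) to force every $C_{k,l}$ to vanish. This yields $D \equiv 0$ and completes the proof.
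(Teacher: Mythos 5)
The paper cites this as Lemma 3.6 of \cite{Wilson2} without reproducing a proof, so there is no in-text argument to compare against; your proof must stand alone, and it is correct. The telescoping verification of the Lagrange identity matches the paper's normalization of $\mathcal{C}_R$, the cross-term cancellation when the two instances are added is exact, and you rightly identify the one nontrivial step: $D' \equiv 0$ only shows that $D(\cdot; f,g)$ is constant in $x$, with the constant possibly depending on $(f,g)$. Your coefficient recurrence $C'_{k,l} + C_{k-1,l} + C_{k,l-1} = 0$ together with the boundary $C_{k,l}\equiv 0$ for $k + l \geq \ord A$ does close this: at each level $s$, the equations coming from indices with $k+l = s+1$ reduce (since $C'_{k,l} = 0$ there by the inductive hypothesis) to $C_{k-1,l} = -C_{k,l-1}$, and the sweep starting from $(s,0)$ forces every $C_{a,b}$ with $a + b = s$ to vanish in turn. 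A somewhat slicker alternative for this constant-pinning step: $D(x; f, g)$ depends only on the $(\ord A - 1)$-jets of $f, g$ at $x$, so for any target jets at $x_1$ one may choose polynomials $f, g$ realizing them while having all relevant derivatives vanish at some other point $x_0$; then constancy gives $D(x_1) = D(x_0) = 0$. Both arguments are valid; yours has the merit of being entirely coefficient-level and self-contained.
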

From this, we see that if $A=A^*$, then $\ker(A_2)\subseteq \ker(A)$ and $\ker(A_1^*)\subseteq \ker(A)$ 
are orthogonal under the pairing defined by the concomitant of $A(x,\partial_x)$.

As is well-known in the theory of factorizations of linear differential operators, a factorization of a differential operator 
$$A(x,\partial_x) = A_1(x,\partial_x)A_2(x,\partial_x)$$
corresponds to a choice of a subspace $V\subseteq \ker(A)$.  The subspace $V$ corresponds to the kernel of $A_2(x,\partial_x)$ and determines the value of the operator $A_2(x,\partial_x)$ up to a left multiple by a function of $x$.
As is readily seen from the previous lemma, the kernel of $A_1(x,\partial_x)^*$ is completely determined by $V$ and given by the orthogonal complement
$$V^\perp = \{g\in \ker(A^*): \mathcal{C}_A(f,g)=0\ \forall f\in V\}.$$
Thus to obtain factorizations of the form \eqref{eqn:main factorization}, we search in particular for subspaces $V\subseteq \ker(q(L)^2)$ satisfying $V^\perp = V$.  In other words, we search for Lagrangian subspaces of the symplectic vector space $\ker(q(L)^2)$.  To summarize, we have the following proposition.
\begin{proposition}
Factorizations of the form \eqref{eqn:main factorization} with $p(x)$ and the coefficients of $P(x,\partial_x)$ not necessarily rational functions, correspond precisely to Lagrangian subspaces of the symplectic vector space $\ker(q(L)^2)$ whose symplectic form is defined by the concomitant of $q(L)^2$.
\end{proposition}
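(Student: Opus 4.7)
The plan is to reduce the factorization \eqref{eqn:main factorization} to a self-adjoint factorization with a monic operator, and then to apply the concomitant/kernel dictionary recalled in the preceding lemmas. Setting $\tilde P := \frac{1}{p}P$, the hypothesis that $p$ equals the leading coefficient of $P$ makes $\tilde P$ monic, and a direct computation shows
\[
P^* \frac{1}{p^2} P \;=\; \tilde P^{\,*} p \cdot \tfrac{1}{p^2} \cdot p\, \tilde P \;=\; \tilde P^{\,*} \tilde P.
\]
So \eqref{eqn:main factorization} is equivalent to a monic self-adjoint factorization $\tilde P^{\,*}\tilde P = q(L)^2$, and it suffices to put such factorizations in bijective correspondence with Lagrangian subspaces of $\ker(q(L)^2)$.

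For the forward direction, given such a $\tilde P$, set $V := \ker(\tilde P)$. Since $\tilde P$ divides $q(L)^2$ on the right, $V \subseteq \ker(q(L)^2)$, and since $\tilde P$ has order $2\deg q = \tfrac12\ord(q(L)^2)$, $V$ is half-dimensional in $\ker(q(L)^2)$. Applying Lemma \ref{lem:product concomitant} to $A=q(L)^2$, $A_1=\tilde P^{\,*}$, $A_2=\tilde P$ gives that $\ker(\tilde P)$ and $\ker(\tilde P^{\,**})=\ker(\tilde P)$ are orthogonal under the concomitant pairing; thus $V\subseteq V^\perp$, and by the dimension count $V=V^\perp$, so $V$ is Lagrangian.

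For the backward direction, given a Lagrangian $V\subseteq\ker(q(L)^2)$, let $\tilde P$ be the unique monic differential operator of order $\dim V$ whose kernel equals $V$ (given by the Wronskian formula in any simply connected neighborhood avoiding the poles of the Wronskian). Since $V\subseteq\ker(q(L)^2)$, $\tilde P$ right-divides $q(L)^2$, so we may write $q(L)^2 = A_1\tilde P$ with $\ord A_1 = \ord\tilde P$. Taking formal adjoints gives $q(L)^2 = \tilde P^{\,*}A_1^*$, and Lemma \ref{lem:product concomitant} now shows $\ker(A_1^*)$ is orthogonal to $V$, hence contained in $V^\perp = V = \ker(\tilde P)$. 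Since $\dim\ker(A_1^*) = \ord A_1 = \ord\tilde P = \dim V$, equality holds, so $A_1^* = h\,\tilde P$ for a function $h$. Comparing leading coefficients in $q(L)^2 = A_1\tilde P$ forces $\mathrm{lc}(A_1)=1$, whence $\mathrm{lc}(A_1^*) = (-1)^{\ord A_1}=1$ (the order is even), and comparing with $\mathrm{lc}(h\tilde P) = h$ yields $h=1$. Hence $A_1 = \tilde P^{\,*}$, producing the desired self-adjoint factorization. These two constructions are inverse to each other because $\tilde P$ is determined up to a left scalar function by its kernel and the monic normalization removes this ambiguity.

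The main technical point is the leading-coefficient normalization in the backward direction: without the observation that $\ord(\tilde P)=2\deg q$ is even, one could only conclude $A_1=\pm \tilde P^{\,*}$ up to a scalar function, and the bijection would fail. Everything else reduces to the two lemmas on concomitants together with the standard kernel/right-divisor dictionary for linear differential operators.
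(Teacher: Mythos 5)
Your proof is correct and follows essentially the same route as the paper, which presents this proposition as an immediate consequence of Wilson's lemma on concomitants of operator products together with the standard kernel/right-factor dictionary. The paper leaves the argument as a short preamble rather than a formal proof, so your version is simply a more careful elaboration: you make explicit the reduction to the monic self-adjoint factorization $\tilde P^*\tilde P = q(L)^2$, the dimension count identifying $\ker(\tilde P)$ as half-dimensional, and — most usefully — the leading-coefficient comparison in the backward direction that upgrades $\ker(A_1^*) = \ker(\tilde P)$ to the identity $A_1 = \tilde P^*$ (using that $\ord\tilde P = 2\deg q$ is even). This last point is genuinely left implicit in the paper, so your write-up adds a worthwhile detail rather than just paraphrasing. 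One tiny remark: in the backward direction you also implicitly use that $q(L)^2$ is formally self-adjoint, which holds because $L = \partial_x^2 - x$ is; it would be worth stating this one-liner since it is what licenses taking adjoints of $q(L)^2 = A_1\tilde P$.
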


\subsection{Differential Galois Theory}
Our next task is to determine the symmetric factorizations obtained in the previous section which are rational.
For the convenience of the reader, we briefly outline the requisite basics of Picard-Vessiot extensions and the Fudamental Theorem of Differential Galois Theory.  We direct the interested reader to \cite{Put} for a more thorough treatment.

\begin{definition}
Let $(K,\partial)$ be a differential field and let $A\in K[\partial]$ be a linear differential operator with coefficients in $K$.
The \vocab{Picard-Vessiot} extension of $K$ associated with $A(x,\partial_x)$ is a differential field extension $(F,\partial)$ of $K$ whose constants all belong to $K$ and which is generated by the solutions of the homogeneous equation $Ag = 0$.
\end{definition}

Picard-Vessiot extensions of a differential field play precisely the role of Galois extensions in field theory.  Likewise, the usual Galois group is replaced by a similar object consisting of field automorphisms respecting differentiation.
\begin{definition}
The \vocab{differential Galois group} $\DGal(F/K)$ consists of all $K$-linear field automorphisms 
$\sigma: F \rightarrow F$ of $F$ satisfying $\sigma(\partial\cdot a) = \partial\cdot\sigma(a)$ for all $a\in F$.
\end{definition}

Analogous to the case of Galois extensions of fields, we have the following theorem relating differential subextensions and Zariski-closed subgroups of the differential Galois group (see \cite[Proposition 1.34]{Put}).
\begin{theorem}[Fundamental Theorem of Differential Galois Theory]
Let $(K,\partial)$ be a differential field whose subfield of constants is algebraically closed and let $(F,\partial)$ be a Picard-Vessiot extension of $K$.
Then there is a bijective correspondence between differential subfields $K\subseteq F'\subseteq F$ and Zariski-closed subgroups $G'\subseteq \DGal(F/K)$ given by
$$G'\subseteq \DGal(K/F) \mapsto K^{G'} = \{a\in K: \sigma(a) = a,\ \forall \sigma \in G'\}.$$
$$K\subseteq F'\subseteq F\mapsto \DGal(F'/K) = \{\sigma\in \DGal(F/K): \sigma(a)=a,\ \forall a\in F'\}.$$
Furthermore, this correspondence restricts to a correspondence between Picard-Vessiot subextensions of $F/K$ and normal subgroups of $\DGal(F/K)$.
\end{theorem}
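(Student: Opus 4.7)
The plan is to follow the standard framework for Picard-Vessiot theory, adapted to the setup above. First, I would replace the Picard-Vessiot extension $F$ by its Picard-Vessiot ring $R \subseteq F$: if $A(x,\partial_x)$ has order $n$ with fundamental solution matrix $Y \in GL_n(F)$ whose columns form a basis for the solution space over the constants $C$, then $R := K[Y_{ij}, \det(Y)^{-1}]$ is a simple differential $K$-algebra with fraction field $F$. The group $G := \DGal(F/K)$ acts on $R$, and the assignment $\sigma \mapsto Y^{-1}\sigma(Y)$ identifies $G$ with a Zariski-closed subgroup of $GL_n(C)$, thereby giving $G$ the structure of a linear algebraic group. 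Both assignments in the correspondence are then well-defined: $F^{G'}$ is visibly a differential subfield of $F$ containing $K$, and $\DGal(F/F')$ is Zariski-closed in $G$ because each fixing condition $\sigma(a) = a$ for $a \in F'$ translates to a polynomial constraint on the matrix entries of $Y^{-1}\sigma(Y)$.

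The easy inclusions $F' \subseteq F^{\DGal(F/F')}$ and $G' \subseteq \DGal(F/F^{G'})$ are tautological. The heart of the proof is verifying the reverse inclusions. For $F^{\DGal(F/F')} \subseteq F'$, I would first observe that $F/F'$ is itself a Picard-Vessiot extension (generated by the same fundamental solutions of $A$, now viewed over the enlarged base $F'$), so it suffices to prove that whenever $F/K$ is Picard-Vessiot with Galois group $G$, one has $F^G = K$. This rests on the torsor structure of the Picard-Vessiot ring: one shows that $R \otimes_K F \cong F \otimes_C C[G]$ as differential $F$-algebras with compatible $G$-actions; taking $G$-invariants gives $R^G \otimes_K F = F$, whence $R^G = K$ and $F^G = K$ upon passing to fraction fields. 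For $\DGal(F/F^{G'}) \subseteq G'$ with $G'$ Zariski-closed, I would invoke Chevalley's theorem: every Zariski-closed subgroup of $GL_n(C)$ is the stabilizer of a line in some polynomial representation, and that line produces an element of $F$ fixed precisely by $G'$.

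Finally, the normal subgroup correspondence follows from the observation that $F'/K$ is a Picard-Vessiot extension exactly when $F'$ is stable under the $G$-action, since $\sigma \DGal(F/F') \sigma^{-1} = \DGal(F/\sigma(F'))$ equals $\DGal(F/F')$ precisely when $\sigma(F') = F'$ for every $\sigma \in G$. The main technical obstacle in this entire program is establishing the torsor isomorphism $R \otimes_K F \cong F \otimes_C C[G]$. This relies on Kolchin's structure theorem for simple differential algebras over an algebraically closed field of constants, together with the fact that $R$ becomes the coordinate ring of a $G$-torsor over $\spec(F)$ after base change; the hypothesis that $C$ is algebraically closed is used precisely to trivialize this torsor and thereby control the invariants.
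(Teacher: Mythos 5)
The paper does not prove this theorem: it is stated as a recalled classical result with a citation to van der Put--Singer (their Proposition 1.34), and the surrounding text even remarks that the authors will not use the full force of the correspondence. So there is no proof in the paper to compare against; I can only evaluate your sketch on its own terms, which is in fact the standard torsor-theoretic argument that appears in the cited reference.

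Your outline is essentially correct, but two steps are compressed to the point of concealing real work. First, the passage from $R^G = K$ to $F^G = K$ is not automatic: a $G$-invariant element $a = r/s \in \mathrm{Frac}(R)$ need not have $G$-invariant numerator and denominator. One needs an additional lemma, e.g.\ that the nonzero ideal $\{q \in R : qa \in R\}$ is $G$-stable and that $R$ has no nonzero proper $G$-stable ideals (which follows from the torsor isomorphism by faithfully flat descent), or one argues directly at the level of $R \otimes_K R$ with maximal ideals and the algebraic closedness of $C$. Second, the Chevalley step is stated too loosely: the line $L$ that Chevalley's theorem produces is only $G'$-\emph{semi}-invariant (a spanning vector $v$ satisfies $\sigma(v) = \chi(\sigma)v$ for a character $\chi$ of $G'$), so $v$ itself is not ``an element of $F$ fixed precisely by $G'$.'' To manufacture an honest $G'$-invariant in $F$ that is moved by every $\sigma \notin G'$, one typically passes to a ratio of two semi-invariants of the same weight (or works in an exterior power and forms Pl\"ucker-type ratios). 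With these two points filled in, the sketch becomes a complete proof along the lines of the reference the paper cites.
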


We will not rely on the full force of this correspondence, 
and therefore will not have to recall the precise nature of the topological structure of $\DGal(F/K)$ as a group subscheme of a general linear group.
Instead, we will use only the immediate fact that
\begin{equation}
K = \{a\in F: \sigma(a) = a,\ \forall \sigma\in \DGal(F/K)\}.
\end{equation}

Since differential operators are determined (up to a multiple) by their kernels, rationality of a differential operator may be characterized by differential Galois invariance of the associated kernel.
\begin{theorem}
Let $A(x,\partial_x)$ be a differential operator with rational coefficients and let $F$ be the Picard-Vessiot extension of $\bbc(x)$ for $A$.
Consider a factorization $A(x,\partial_x) = A_1(x,\partial_x)A_2(x,\partial_x)$ with $A_2$ monic.  
Then $A_1(x,\partial_x)$ and $A_2(x,\partial_x)$ have rational coefficients if and only if $\ker(A_2)\subseteq\ker(A)$ is invariant under the action of $\DGal(F/\mathbb C(x))$.
\end{theorem}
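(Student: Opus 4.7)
The plan is to exploit the fact that, since $A_2$ is monic, it is uniquely determined by its kernel $V := \ker(A_2) \subseteq \ker(A)$ through a Wronskian formula, and then translate the statement into one about Galois invariance of that Wronskian. Concretely, if $\{v_1,\ldots,v_k\}$ is any basis of $V$ (viewed inside the Picard--Vessiot extension $F$), then for any $f \in F$
\[
A_2(f) \;=\; \frac{W(v_1,\ldots,v_k,f)}{W(v_1,\ldots,v_k)},
\]
and expanding the numerator along the last column expresses each coefficient of $A_2$ as a ratio of Wronskians in the $v_i$'s. The factorization $A = A_1 A_2$ then determines $A_1$ by right Ore division, which preserves rationality because $A_2$ is monic. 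So it suffices to prove the equivalence for $A_2$.

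For the forward direction, assume $A_1, A_2 \in \mathbb{C}(x)[\partial_x]$. Any $\sigma \in \DGal(F/\mathbb{C}(x))$ is $\mathbb{C}(x)$-linear and commutes with $\partial_x$, hence commutes with the operator $A_2$. Therefore, for $v \in V$, $A_2(\sigma v) = \sigma(A_2 v) = 0$, so $\sigma v \in V$, giving the invariance.

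For the reverse direction, assume $V$ is $\DGal(F/\mathbb{C}(x))$-invariant. Pick a basis $v_1,\ldots,v_k$ of $V$. For $\sigma \in \DGal(F/\mathbb{C}(x))$, invariance means $\sigma v_j = \sum_i M(\sigma)_{ij} v_i$ for some matrix $M(\sigma) \in GL_k(\mathbb{C})$. Applying $\sigma$ to the defining Wronskian identity and using multilinearity gives
\[
\sigma\!\bigl(W(v_1,\ldots,v_k,f)\bigr) = \det M(\sigma)\, W(v_1,\ldots,v_k,\sigma f),
\qquad
\sigma\!\bigl(W(v_1,\ldots,v_k)\bigr) = \det M(\sigma)\, W(v_1,\ldots,v_k),
\]
so the $\det M(\sigma)$ factor cancels in the ratio. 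Taking $f = x^j$ or any element of $\mathbb{C}(x)$ and reading off coefficients, one sees that each coefficient of $A_2$ is fixed by every $\sigma$. By the identification $\mathbb{C}(x) = F^{\DGal(F/\mathbb{C}(x))}$ stated after the Fundamental Theorem, the coefficients of $A_2$ lie in $\mathbb{C}(x)$; hence $A_1 = A A_2^{-1}$ does as well.

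The main subtlety is justifying the Wronskian presentation of $A_2$ and its compatibility with the Galois action. One needs $v_1,\ldots,v_k$ to live in a differential extension of $\mathbb{C}(x)$ inside $F$ where Wronskians and their ratios behave as expected; since $V \subseteq \ker(A) \subseteq F$ by definition of the Picard--Vessiot extension, this is automatic, and the standard identity $A_2 f \cdot W(v_1,\ldots,v_k) = W(v_1,\ldots,v_k,f)$ (provable by checking that both sides vanish precisely on $V \oplus \mathbb{C}\cdot f_{\mathrm{arbitrary}}$ and have matching leading behavior) provides the bridge. Everything else is a routine application of the invariance equation $\mathbb{C}(x) = F^{\DGal(F/\mathbb{C}(x))}$.
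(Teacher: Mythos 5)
Your proof is correct and rests on the same core ideas as the paper's: $\sigma$ commutes with $\partial_x$, kernel invariance forces $\sigma(A_2)$ to essentially coincide with $A_2$, and the identification $\mathbb{C}(x) = F^{\DGal(F/\mathbb{C}(x))}$ then forces rationality. The difference is in how you nail down that $\sigma(A_2)=A_2$ (not merely a scalar multiple). You work concretely through the Wronskian representation $A_2(f)=W(v_1,\dots,v_k,f)/W(v_1,\dots,v_k)$, observe that applying $\sigma$ multiplies both numerator and denominator by $\det M(\sigma)$, and read off coefficient invariance by testing against $f\in\mathbb{C}(x)$. The paper instead argues abstractly: from $\ker(A_2)\subseteq\ker(\sigma(A_2))$ plus equality of orders, $\sigma(A_2)=bA_2$ for some $b\in F$, and then monicity forces $b=1$. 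The two routes are essentially dual presentations of one fact — your $\det M(\sigma)$ cancellation is exactly the paper's ``$b=1$'' step made explicit via determinants — and both then hand off to the Fundamental Theorem. Your version is more computational and self-contained (it even reproves uniqueness-up-to-scalar on the way), while the paper's is leaner because it quotes the standard uniqueness of a monic operator with prescribed kernel. One small point worth making explicit in your write-up: the claim $M(\sigma)\in GL_k(\mathbb{C})$ (constant entries) uses that the constants of $F$ are exactly $\mathbb{C}$, which is part of the Picard--Vessiot hypothesis; the paper's proof uses the equivalent fact implicitly when it compares kernel dimensions over $\mathbb{C}$.
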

\begin{proof}
For $\sigma\in \DGal(F/\mathbb C(x))$, let $\sigma(A_j) := \sigma(A_j)(x,\partial_x)$ denote the operator obtained by applying the automorphism to the coefficients.
Since the automorphism preserves differentiation, we know that
$$\sigma(A_j)(x,\partial_x)\cdot \sigma(a) = \sigma(A_j(x,\partial_x)\cdot a),\ \forall a\in F.$$
If $A_1(x,\partial_x)$ and $A_2(x,\partial_x)$ have rational coefficients, then clearly $\sigma(A_j) = A_j$ and therefore $\ker(\sigma(A_j)) = \ker(A_j)$.
Thus the kernel of $A_j(x,\partial_x)$ is invariant under the action of $\DGal(F/\mathbb C(x))$.

Conversely, suppose that $\ker(A_2)\subseteq \ker(A)$ is invariant under the action of the differential Galois group, ie. $\sigma(\ker(A_2)) = \ker(A_2)$
Then $\sigma(A_2)\cdot \sigma(a) = \sigma(A_2\cdot a) = \sigma(0) = 0$ for all $a\in \ker(A)$ and therefore $\ker(A_2) \subseteq \ker(\sigma(A_2))$.  Since the order of $A_2$ and the order of $\sigma(A_2)$ are the same, their kernels will have the same dimension.  Therefore $\ker(\sigma(A_2)) = \ker(A_2)$ and consequently $\sigma(A_2) = bA_2$ for some $b\in F$.  Since $A_2$ has leading coefficient $1$, it follows that $b=1$.  Hence $\sigma(A_2)=A_2$ and from the Fundamental Theorem of Differential Galois Theory, the coefficients of $A_2$ must all be rational functions.  Lastly, since $A$ and $A_2$ have rational coefficients, it follows that $A_1$ has rational coefficients.
\end{proof}

\begin{corollary}
Let $A(x,\partial_x)$ be a self-adjoint differential operator with rational coefficients and let $F$ be the Picard-Vessiot extension for $A$.
Then the self-adjoint, rational factorization of $A(x,\partial_x)$ correspond precisely with the $\DGal(F/\mathbb C(x))$-invariant Lagrangian subspaces of $\ker(A)$.
\end{corollary}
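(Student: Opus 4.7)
The plan is to combine the two correspondences established earlier in the section. The preceding proposition identifies self-adjoint factorizations of $A(x,\partial_x)$ with Lagrangian subspaces of $\ker(A)$ under the symplectic pairing defined by the concomitant, while the preceding theorem identifies rational factorizations with $\DGal(F/\bbc(x))$-invariant subspaces. Crucially, both correspondences assign to a factorization $A = A_1 A_2$ (with $A_2$ normalized to be monic) the same canonical subspace $V := \ker(A_2) \subseteq \ker(A)$, so the two conditions can be imposed simultaneously and independently.

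For the forward direction, assume $A = A_2^* A_2$ is a self-adjoint rational factorization and set $V := \ker(A_2)$. By Lemma \ref{lem:product concomitant} and the discussion following it, $\ker(A_1^*) = V^{\perp}$, so the equality $A_1 = A_2^*$ forces $V = V^{\perp}$, i.e., $V$ is Lagrangian. Rationality of $A_2$ combined with the preceding theorem then gives that $V$ is stable under $\DGal(F/\bbc(x))$.

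Conversely, starting from a $\DGal$-invariant Lagrangian subspace $V \subseteq \ker(A)$, let $A_2$ be the unique monic differential operator with $\ker(A_2) = V$ (a priori with coefficients in the Picard-Vessiot field $F$) and factor $A = A_1 A_2$. Galois invariance of $V$, via the preceding theorem, forces $A_2$ and hence $A_1$ to have rational coefficients. The Lagrangian condition combined with $\ker(A_1^*) = V^{\perp} = V = \ker(A_2)$ and the dimension count $\ord(A_2) = \ord(A)/2 = \ord(A_1^*)$ then yields $A_1^* = A_2$ up to a scalar, which is pinned down by comparing leading coefficients. This produces a self-adjoint rational factorization of $A$.

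The only genuine content is the compatibility of the two correspondences---that the same operator $A_2$ encodes both the Lagrangian condition and the Galois-invariance condition---which is immediate from the canonical nature of the assignment $V \leftrightarrow A_2$. I expect no substantive obstacle: the corollary is essentially the intersection of the two previously established bijections, with the Lagrangian property governing self-adjointness and Galois invariance governing rationality, and the two conditions interacting only through the common parametrization by subspaces of $\ker(A)$.
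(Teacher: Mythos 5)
Your proposal is correct and takes essentially the same approach as the paper, which simply notes that the corollary follows immediately by combining the proposition (self-adjoint factorizations $\leftrightarrow$ Lagrangian subspaces via the concomitant) with the theorem (rational factorizations $\leftrightarrow$ Galois-invariant subspaces), both parametrized by $V = \ker(A_2)$. Your write-up merely unpacks that one-line deduction in detail.
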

\begin{proof}
This follows immediately from the theorem and the results of the previous subsection.
\end{proof}

\subsection{The classification}
Now let $a_1,\dots, a_r\in \mathbb C$ be the distinct roots of $q(z)$ and write
$$q(z) = (z-a_1)^{d_1}\dots(z-a_r)^{d_r}$$
for some positive integers $d_1,\dots, d_r$ and distinct $a_1, \ldots, a_r \in \bbc$.
The kernel of $q(L)^2$ for $L(x,\partial_x) = \partial_x^2-x$ the Airy operator is given by the following lemma.
\begin{lemma}\label{lem:kernel basis}
The kernel of $q(L)^2$ has basis given by 
$$\{ \Ai^{(j)}(x+a_i),\Bi^{(j)}(x+a_i): 1\leq k\leq r,\ 0\leq j\leq 2d_k\}$$
where here $\Ai(x)$ and $\Bi(x)$ are the Airy functions of the first and second kind, respectively.
\end{lemma}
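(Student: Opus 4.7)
The plan is to split the operator into factors corresponding to the distinct roots of $q$ and then construct a basis of each piece by differentiating the bispectral identity $L(x,\partial_x)\Ai(x+z) = z\,\Ai(x+z)$ (and its $\Bi$-analogue) with respect to the spectral parameter $z$. Since $a_1,\dots,a_r$ are distinct, the polynomials $(z-a_i)^{2d_i}\in\bbc[z]$ are pairwise coprime, so the Chinese Remainder Theorem applied inside the commutative ring $\bbc[L]$ yields
\[
\ker q(L)^2 \;=\; \bigoplus_{i=1}^{r}\ker(L-a_i)^{2d_i}.
\]
Each factor $(L-a_i)^{2d_i}$ has order $4d_i$, so its kernel has dimension $4d_i$, and $\ker q(L)^2$ has total dimension $4\deg q$. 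Hence it suffices to produce, for each $i$, the correct number of linearly independent elements of $\ker(L-a_i)^{2d_i}$.

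Fix a root $a_i$ and differentiate the identity $(L(x,\partial_x)-z)\Ai(x+z)=0$ repeatedly in $z$. An easy induction based on $\partial_z\Ai(x+z)=\Ai'(x+z)$ yields
\[
(L(x,\partial_x)-z)\Ai^{(j)}(x+z) \;=\; j\,\Ai^{(j-1)}(x+z), \qquad j\geq 1,
\]
and hence $(L(x,\partial_x)-z)^{k}\Ai^{(j)}(x+z) = \tfrac{j!}{(j-k)!}\Ai^{(j-k)}(x+z)$ for $0\leq k\leq j$; in particular $(L-z)^{j+1}\Ai^{(j)}(x+z)=0$. Specializing $z=a_i$ shows $\Ai^{(j)}(x+a_i)\in\ker(L-a_i)^{2d_i}$ for $0\leq j\leq 2d_i-1$, and the same argument applied to $\Bi(x+z)$ produces the analogous statements for $\Bi^{(j)}(x+a_i)$.

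For linear independence I exploit the Jordan-chain structure revealed above. Suppose $\sum_{j=0}^{2d_i-1}\bigl(c_{j}\Ai^{(j)}(x+a_i)+d_{j}\Bi^{(j)}(x+a_i)\bigr)=0$ with not all coefficients zero, and let $j^{*}$ be the largest index with $(c_{j^{*}},d_{j^{*}})\neq(0,0)$. Applying $(L-a_i)^{j^{*}}$ annihilates every term with $j<j^{*}$ and leaves $j^{*}!\bigl(c_{j^{*}}\Ai(x+a_i)+d_{j^{*}}\Bi(x+a_i)\bigr)=0$, contradicting the nonvanishing Wronskian of $\Ai$ and $\Bi$. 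Combined with the direct sum decomposition above, this yields the claimed basis. I expect the only delicate step is the clean bookkeeping in the iterated differentiation identity; beyond that the argument is entirely formal.
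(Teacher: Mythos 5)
Your proof is correct, and the core mechanism is the same as the paper's: both establish that $(L-a_k)$ shifts $\Ai^{(j)}(x+a_k)$ down the Jordan chain (you get $(L-z)\Ai^{(j)}(x+z)=j\Ai^{(j-1)}(x+z)$ by differentiating the spectral identity in $z$; the paper derives the equivalent statement from the commutation relation $L\partial_x=\partial_x L+1$, and these routes are interchangeable since $\partial_z\Ai(x+z)=\partial_x\Ai(x+z)$). Where you differ is in completeness: the paper's written proof only verifies that the listed functions lie in $\ker q(L)^2$, leaving linear independence and spanning to the reader (presumably via the dimension count $\dim\ker q(L)^2=\ord q(L)^2=4\deg q$), whereas you explicitly decompose $\ker q(L)^2$ into the primary pieces $\ker(L-a_i)^{2d_i}$ by applying the Chinese Remainder Theorem in $\bbc[L]$, and then prove independence within each piece by applying $(L-a_i)^{j^*}$ and invoking the nonvanishing Wronskian of $\Ai$ and $\Bi$. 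Both arguments are valid; yours is the more self-contained and would be the safer one to present if the dimension count were not already obvious from the order of the operator. One small remark: the index range in the statement as printed should read $0\leq j\leq 2d_k-1$ (and the index $i$ should be $k$), which is the range both you and the paper actually use.
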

\begin{proof}
To prove this, we will rely on the fundamental relation
$$L(x,\partial_x)\partial_x = \partial_x L(x,\partial_x) + 1,$$
which implies that
$$(L(x,\partial_x)-a_k)^m\partial_x^n = \sum_{j=0}^{m\wedge n} \binom{m}{j}\frac{n!}{(n-j)!}\partial_x^{n-j}(L(x,\partial_x)-a_k)^{m-j}.$$
Thus for all $0\leq n < 2d_k-1$, we have
$$(L(x,\partial_x)-a_k)^{2d_k}\Ai^{(n)}(x+a_k) = \sum_{j=0}^{n} \binom{2d_k}{j}\frac{n!}{(n-j)!}\partial_x^{n-j}(L(x,\partial_x)-a_k)^{2d_k-j}\Ai(x+a_k) = 0.$$
Hence $\Ai^{(n)}(x+a_k)\in \ker((L(x,\partial_x)-a_k)^{2d_k})\subseteq \ker(q(L)^2)$ for all $0\leq n < 2d_k$.  The same calculation shows that $\Bi^{(n)}(x+a_k)\in \ker(q(L)^2)$ for all $0\leq n < 2d_k$.
\end{proof}

Thus the Picard-Vessiot extension of the differential field $(\mathbb C(x),\partial_x)$ corresponding to the linear differential operator $q(L(x,\partial_x))^2$ is finitely generated by $4r$ elements
$$F^q = \mathbb C(x)(\Ai(x+a_k),\Ai'(x+a_k),\Bi(x+a_k),\Bi'(x+a_k): 1\leq k \leq r).$$
Using this, we see that the differential Galois group of $F$ is isomorphic to $r$ copies of $\text{SL}_2(\mathbb C)$.
\begin{lemma}
The differential Galois group consists of all differential $\mathbb C(x)$-linear morphisms
$$\sigma: F^q\rightarrow F^q,\quad \left\lbrace\begin{array}{cc} \Ai(x+a_k)\mapsto \alpha_k\Ai(x+a_k) + \beta_k\Bi(x+a_k)\\\Bi(x+a_k)\mapsto \gamma_k\Ai(x+a_k) + \delta_k\Bi(x+a_k)\end{array}\right.\ \forall 1\leq k\leq r,$$
where here $\alpha_k,\beta_k,\gamma_k,\delta_k\in \mathbb C$ with $\alpha_k\delta_k-\beta_k\gamma_k = 1$.
\end{lemma}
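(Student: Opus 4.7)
The plan is to prove both inclusions. The forward inclusion---that every $\sigma \in \DGal(F^q/\bbc(x))$ has the stated matrix form---follows from elementary Galois invariance. The reverse inclusion---that every tuple in $\mathrm{SL}_2(\bbc)^r$ is realized---is the main obstacle and requires linear disjointness of the individual Picard--Vessiot extensions.

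For the forward inclusion, fix $k$. The operator $L - a_k \in \bbc(x)[\partial_x]$ has two-dimensional $\bbc$-solution space spanned by $\Ai(x+a_k)$ and $\Bi(x+a_k)$. Since $\sigma$ is $\bbc(x)$-linear and commutes with $\partial_x$, it preserves this solution space, yielding constants $\alpha_k, \beta_k, \gamma_k, \delta_k \in \bbc$ as displayed in the statement (the coefficients lie in the constant subfield $\bbc$ of $F^q$). Invariance of the nonzero constant Wronskian
\[
W_k = \Ai(x+a_k)\Bi'(x+a_k) - \Ai'(x+a_k)\Bi(x+a_k)
\]
under $\sigma$ forces the determinant condition $\alpha_k \delta_k - \beta_k \gamma_k = 1$.

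For the reverse inclusion, let $F_k$ denote the Picard--Vessiot extension of $\bbc(x)$ for $L - a_k$. The classical determination of the differential Galois group of the Airy equation (via e.g.\ Kovacic's algorithm, with the irregular singularity at infinity ruling out proper algebraic subgroups) gives $\DGal(F_k/\bbc(x)) \cong \mathrm{SL}_2(\bbc)$. Since $F^q$ is the compositum of the Picard--Vessiot subextensions $F_1, \dots, F_r$, restriction defines an injection
\[
\DGal(F^q/\bbc(x)) \hookrightarrow \prod_{k=1}^r \mathrm{SL}_2(\bbc)
\]
that is surjective on each factor projection. Upgrading this to an isomorphism---the main obstacle---amounts to ruling out nontrivial Goursat-style identifications between factors. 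Since the only proper normal algebraic subgroup of $\mathrm{SL}_2(\bbc)$ is the center $\{\pm I\}$, any such identification would force either $F_k = F_{k'}$ for some $k \neq k'$ or a common $\mathrm{PSL}_2$-type subextension between two factors. Both are excluded via the asymptotic expansions on $\Sigma_\epsilon$ from the previous section: for large $x$ one has $\Ai(x+a_k) \sim c\,(x+a_k)^{-1/4} e^{-\tfrac{2}{3}(x+a_k)^{3/2}}$, and expanding the exponent gives subleading factors $e^{-a_k x^{1/2}}$ whose ratios across distinct $a_k \neq a_{k'}$ are transcendental over $\bbc(x)$. This precludes any common differential subextension beyond $\bbc(x)$ itself (in particular the $\mathrm{PSL}_2$-type one, realized by symmetric squares of solutions), so the embedding is surjective onto the full product $\mathrm{SL}_2(\bbc)^r$.
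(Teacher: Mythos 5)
Your forward inclusion matches the paper's argument exactly: $\sigma$ preserves the two-dimensional $\bbc$-solution space of $L - a_k$ inside $F^q$, and invariance of the constant Wronskian $W(\Ai(x+a_k),\Bi(x+a_k)) = 1/\pi$ forces $\alpha_k\delta_k - \beta_k\gamma_k = 1$. Where you diverge is the reverse inclusion. The paper simply declares it ``standard'' and cites \cite[Example 8.15]{Put}, which treats a single Airy equation; you are right that for a compositum of several Picard--Vessiot extensions the surjectivity onto the full product $\mathrm{SL}_2(\bbc)^r$ is the real content, and your framework for it --- Kovacic to get $\DGal(F_k/\bbc(x)) \cong \mathrm{SL}_2(\bbc)$, then a Goursat analysis exploiting that $\{\pm I\}$ is the only proper closed normal subgroup of $\mathrm{SL}_2(\bbc)$ --- is the correct skeleton. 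So your approach is genuinely more self-contained than the paper's on this point.

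There is, however, a real gap in your final step. To rule out the $\mathrm{PSL}_2$-type Goursat identification you must show $F_k^{\{\pm I\}} \cap F_{k'}^{\{\pm I\}} = \bbc(x)$, and your argument appeals to the transcendence of ratios of the subleading exponentials $e^{-a_k x^{1/2}}$. But the $\mathrm{PSL}_2$-type subextension is generated by the quadratic invariants of the center, and one of these, $\Ai(x+a_k)\Bi(x+a_k) \sim \tfrac{1}{2\pi}(x+a_k)^{-1/2}$, has purely algebraic large-$x$ behavior: the exponentials cancel, so its leading asymptotics are the \emph{same} for every $a_k$ up to lower-order algebraic corrections. Asymptotics of this generator therefore do not distinguish $a_k$ from $a_{k'}$, and the statement ``transcendental ratios preclude any common subextension'' does not follow as written. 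To close the gap you would want to argue instead with $\Ai(x+a_k)^2$ and $\Bi(x+a_k)^2$, whose asymptotics carry the factors $e^{\mp 2a_k x^{1/2}}$, or better, replace the informal asymptotics by a structural invariant --- e.g.\ the exponential parts (slopes and determining factors) of the symmetric-square modules at $\infty$ are distinct for distinct $a_k$, so no nontrivial differential subquotient is shared. Either fix salvages the argument, but as stated the final sentence of your reverse inclusion does not prove what it claims.
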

\begin{proof}
The fact that
$$\begin{array}{c}\Ai(x+a_k)\mapsto \alpha\Ai(x+a_k) + \beta\Bi(x+a_k)\\ \Bi(x+a_k)\mapsto \gamma\Ai(x+a_k) + \delta\Bi(x+a_k)\end{array},\quad\left[\begin{array}{cc}\alpha & \beta\\\gamma & \delta\end{array}\right]\in\text{SL}_2(\mathbb C),$$
is a differential automorphism is standard.  See for example \cite[Example 8.15]{Put}.  Therefore, we need only show that this accounts for all differential automorphisms.

If $\sigma: F^q\rightarrow F^q$ is a differential automorphism fixing $\mathbb C(x)$, then
$$\frac{\sigma(\Ai(x+a_k))''}{\sigma(\Ai(x+a_k))} = \sigma\left(\frac{\Ai''(x+a_k)}{\Ai(x+a_k)}\right) = \sigma(x+a_k) = x+ a_k.$$
Thus $\sigma(\Ai(x+a_k))$ must be a solution of the differential equation $y'' = (x+a_k)y$, and therefore a linear combination of $\Ai(x+a_k)$ and $\Bi(x+a_k)$ for all $k$.
A similar statement holds for $\sigma(\Bi(x+a_k))$ so that 
$$\sigma: \left\lbrace\begin{array}{cc} \Ai(x+a_k)\mapsto \alpha_k\Ai(x+a_k) + \beta_k\Bi(x+a_k)\\\Bi(x+a_k)\mapsto \gamma_k\Ai(x+a_k) + \delta_k\Bi(x+a_k)\end{array}\right.\ \forall 1\leq k\leq r$$
for some $\alpha_k,\beta_k,\gamma_k,\delta_k\in \mathbb C$.
Lastly, the Wronskian identity implies
$$W(\Ai(x+a_k),\Bi(x+a_k)) = \Ai'(x+a_k)\Bi(x+a_k)-\Ai(x+a_k)\Bi'(x+a_k) = \frac{1}{\pi} \cdot$$
Since the Wronskian is skew-symmetric, we can conclude that
\begin{align*}
\frac{1}{\pi}
  &= \sigma(W(\Ai(x+a_k),\Bi(x+a_k)))\\
  &= W(\sigma(\Ai(x+a_k)),\sigma(\Bi(x+a_k)))\\
  &= (\alpha\delta-\beta\gamma)W(\Ai(x+a_k),\Bi(x+a_k)) = (\alpha\delta-\beta\gamma)/\pi.
\end{align*}
Hence $\alpha\delta-\beta\gamma = 1$.
\end{proof}

Using this, we can obtain the following characterization of the Galois-invariant subspaces of $\ker(q(L)^2)$.
\begin{lemma}\label{lem:invariant subspaces}
Suppose that $V\subseteq\ker(q(L)^2)$ is a subspace.  Then $V$ is invariant under the action of the differential Galois group if and only if $V$ is spanned by \emph{pairs} of elements of the form
$$\sum_{j=0}^{2d_k-1}\alpha_{kj}\Ai^{(j)}(x+a_k),\quad \sum_{j=0}^{2d_k-1}\alpha_{kj}\Bi^{(j)}(x+a_k)$$
\end{lemma}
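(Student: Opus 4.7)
The lemma is a characterization of the $\DGal(F^q/\bbc(x))$-invariant subspaces of $\ker(q(L)^2)$. The \emph{if} direction is immediate: since any $\sigma \in \DGal$ commutes with $\partial_x$, its action on each derivative pair $(\Ai^{(j)}(x+a_k), \Bi^{(j)}(x+a_k))$ is by the same $\mathrm{SL}_2(\bbc)$ matrix as on $(\Ai(x+a_k), \Bi(x+a_k))$, so a pair with common coefficient vector $(\alpha_{kj})_j$ is mapped to a linear combination of pairs with a common coefficient vector. The substance is in the \emph{only if} direction, which I would reduce to a representation-theoretic classification in two steps.

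First, decompose $\ker(q(L)^2) = \bigoplus_{k=1}^{r} W_k$, where
\[
W_k := \vspan\{\Ai^{(j)}(x+a_k), \Bi^{(j)}(x+a_k) : 0 \leq j < 2d_k\},
\]
and show that any invariant $V$ splits compatibly as $V = \bigoplus_k (V \cap W_k)$. This follows from the existence of the Galois element $\sigma_k \in \DGal$ whose $k$-th matrix is $-I$ and all other matrices equal $I$ (valid since $\det(-I) = 1$): for $v = \sum_k v_k$ with $v_k \in W_k$, one has $\tfrac{1}{2}(v - \sigma_k(v)) = v_k$, forcing $v_k \in V$ whenever $v \in V$.

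Second, within each $W_k$ only the $k$-th $\mathrm{SL}_2(\bbc)$ factor acts nontrivially. Identifying $W_k \cong \bbc^2 \otimes \bbc^{2d_k}$ via $\Ai^{(j)}(x+a_k) \leftrightarrow e_1 \otimes f_j$ and $\Bi^{(j)}(x+a_k) \leftrightarrow e_2 \otimes f_j$, the action becomes the standard representation on the first factor, trivial on the second. A standard weight-space argument classifies invariant subspaces $Z \subseteq \bbc^2 \otimes \bbc^{2d_k}$: applying the torus $\mathrm{diag}(t, t^{-1})$ to $e_1 \otimes u + e_2 \otimes w \in Z$ and varying $t$ forces $e_1 \otimes u$ and $e_2 \otimes w$ into $Z$ separately, while the Weyl element (sending $e_1 \mapsto e_2$ and $e_2 \mapsto -e_1$) identifies the two weight spaces, yielding $Z = \bbc^2 \otimes U_k$ for a unique $U_k \subseteq \bbc^{2d_k}$. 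Unwinding the identification gives the stated description of $V \cap W_k$ as the span of pairs
\[
\Big(\sum_{j=0}^{2d_k-1} \alpha_{kj} \Ai^{(j)}(x+a_k),\ \sum_{j=0}^{2d_k-1} \alpha_{kj} \Bi^{(j)}(x+a_k)\Big)
\]
as $(\alpha_{kj})_j$ ranges over $U_k$.

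The main conceptual point is recognizing the reduction to the classification of $\mathrm{SL}_2(\bbc)^r$-invariant subspaces of $\bigoplus_k (\mathrm{Std} \otimes \bbc^{2d_k})$; once the setup is in place, no real obstacle remains and each step is routine. Care is needed only in the identifications so that the $\mathrm{SL}_2$-action lands on the correct tensor factor.
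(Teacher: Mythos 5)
Your proof is correct, but it takes a genuinely different route from the paper's. The paper's argument is hands-on and avoids any explicit representation theory: for $f \in V$, it applies the specific unipotent automorphism $\tau_k$ ($\Ai(x+a_k)\mapsto \Ai(x+a_k)+\Bi(x+a_k)$, $\Bi$ fixed) and observes that $\tau_k(f)-f = \sum_j \alpha_{kj}\Bi^{(j)}(x+a_k)\in V$ in one step isolates both the $k$-th block \emph{and} the $\Ai$-coefficient part, after which the Weyl-type element $\sigma_k$ recovers the companion $\sum_j\alpha_{kj}\Ai^{(j)}(x+a_k)$. You instead factor the argument into two cleanly separated reductions: first split off the $W_k$-components using the central element $-I\in\mathrm{SL}_2$, then invoke the classification of $\mathrm{SL}_2(\bbc)$-invariant subspaces of $\mathrm{Std}\otimes\bbc^{2d_k}$ via torus weight-space decomposition plus the Weyl element. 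Both are valid; the paper's is more elementary and self-contained (a single unipotent difference does double duty), while yours makes the underlying structure explicit — that $V$ is an $\mathrm{SL}_2(\bbc)^r$-submodule of $\bigoplus_k(\mathrm{Std}\otimes\bbc^{2d_k})$ — at the cost of quoting the (standard) classification of such submodules. One minor presentational point: the weight-isolation step "varying $t$" deserves a sentence (either a Vandermonde/two-point argument, or citing that a $\mathrm{G}_m$-invariant subspace of a finite-dimensional representation is a sum of weight spaces), but this is routine and does not affect correctness.
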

\begin{proof}
Clearly any subspace spanned by pairs of elements of this form is invariant under the action of the Galois group, since the the action restricts to an action sending each of the functions in the pair to a linear combination of the functions in the pair.  Thus it suffices to show the converse.

Let $f(x)$ be a nonzero element of $V$.  Then
$$f(x) = \sum_{k=1}^r\sum_{j=0}^{2d_k-1} \alpha_{kj}\Ai^{(j)}(x+a_k) + \beta_{kj}\Bi^{(j)}(x+a_k)$$
Consider the differential automorphisms $\sigma_k$ and $\tau_k$ which fix $\Ai(x+a_j)$ and $\Bi(x+a_j)$ and satisfy
$$\sigma_k: \Ai(x+a_k)\mapsto -\Bi(x+a_k),\ \Bi(x+a_k)\mapsto \Ai(x+a_k),$$
$$\tau_k: \Ai(x+a_k)\mapsto \Ai(x+a_k) + \Bi(x+a_k),\ \Bi(x+a_k)\mapsto \Bi(x+a_k).$$
We see that
$$\tau_k(f(x))-f(x) = \sum_{j=0}^{2d_k-1}\alpha_{kj}\Bi^{(j)}(x+a_k)\in V.$$
Following up by applying $\sigma_k$, we see that 
$$\sigma_k(\tau_k(f(x))-f(x)) = \sum_{j=0}^{2d_k-1}\alpha_{kj}\Ai^{(j)}(x+a_k)\in V.$$
Likewise, one may show $\sum_{j=0}^{2d_k-1}\beta_{kj}\Ai^{(j)}(x+a_k),\ \sum_{j=0}^{2d_k-1}\beta_{kj}\Bi^{(j)}(x+a_k)\in V$ and since $k$ was arbitrary, the statement of the Lemma follows immediately.
\end{proof}

Our explicit description of the kernel of $q(L)^2$ allows us to give a concrete formula for the symplectic form on $\ker(q(L)^2)$ defined by the bilinear concomitant.
We start with a combinatorial Lemma.
\begin{lemma}
Let $a,b,m$ be integers.  Then
$$\sum_{k=0}^m(-1)^k\binom{k+a}{k}\binom{b}{m-k} = \binom{b-1-a}{m}.$$
\end{lemma}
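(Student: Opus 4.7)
The plan is to reduce the alternating sum to an instance of the Chu--Vandermonde identity via upper negation. First I would invoke the upper-negation formula
$$(-1)^k \binom{k+a}{k} = \binom{-a-1}{k},$$
which is the standard polynomial identity $\binom{-n}{k}=(-1)^k\binom{n+k-1}{k}$ applied with $n=a+1$. Substituting this into the left-hand side turns the sum into
$$\sum_{k=0}^m \binom{-a-1}{k}\binom{b}{m-k}.$$

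Next I would apply the Chu--Vandermonde identity $\sum_{k=0}^m \binom{x}{k}\binom{y}{m-k} = \binom{x+y}{m}$ with $x=-a-1$ and $y=b$ to collapse the sum to $\binom{-a-1+b}{m}=\binom{b-1-a}{m}$, which is exactly the right-hand side.

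The main ``obstacle'' is only one of conventions: one needs to interpret every binomial coefficient via the polynomial-in-the-upper-index rule $\binom{x}{k}=x(x-1)\cdots(x-k+1)/k!$ so that both upper negation and the Chu--Vandermonde identity hold for arbitrary integers $a,b$ (in particular for the negative values of $b-1-a$ that can arise). Once this convention is fixed, the proof is genuinely two lines.

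If a reader prefers generating functions, the same two steps repackage as the single computation
$$[x^m]\,\frac{(1+x)^b}{(1+x)^{a+1}} \;=\; [x^m]\,(1+x)^{b-1-a} \;=\; \binom{b-1-a}{m},$$
using $(1+x)^{-a-1}=\sum_k (-1)^k\binom{k+a}{k}x^k$ and $(1+x)^b=\sum_k\binom{b}{k}x^k$, and reading off the coefficient of $x^m$ in the product.
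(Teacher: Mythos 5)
Your proof is correct and is essentially the paper's argument: the paper also applies upper negation $\binom{-a-1}{k}=(-1)^k\binom{k+a}{k}$ and then establishes the resulting Vandermonde convolution by comparing coefficients in $(1-z)^{-a-1}(1-z)^b=(1-z)^{b-1-a}$, which is the same computation you give as the generating-function repackaging. The only cosmetic differences are that you cite Chu--Vandermonde by name rather than reproving it, and the paper expands in powers of $(1-z)$ rather than $(1+x)$.
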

\begin{proof}
We use the binomial series expansion on the identity
$$(1-z)^{-a-1}(1-z)^{b} = (1-z)^{b-a-1}$$
to find
$$\sum_{j,k=0}^\infty \binom{-a-1}{k}\binom{b}{j}(-1)^{j+k}z^{j+k} = \sum_{m=0}^\infty \binom{b-1-a}{m}(-1)^mz^m.$$
Now comparing coefficients of $z^m$:
$$\sum_{k=0}^m\binom{-a-1}{k}\binom{b}{m-k}(-1)^{j+k} =  \binom{b-1-a}{m}.$$
Noting that $\binom{-a-1}{k} = (-1)^k\binom{k+a}{a}$, the statement of the lemma follows immediately.
\end{proof}

\begin{proposition}
Let $f(x),g(x)\in\{\Ai(x),\Bi(x)\}$ and choose $0\leq m < 2d_j$ and $0\leq n < 2d_k$.  Then
$$\mathcal{C}_{q(L)^2}(f^{(m)}(x+a_j),g^{(n)}(x+a_k)) = \delta_{jk}\frac{m!n!W(f,g)}{(m+n-2d_k+1)!}\left.\partial_z^{m+n-2d_k+1}\right\rvert_{z=a_k}\cdot\left(\frac{q(z)^2}{(z-a_k)^{2d_k}}\right)$$
for all nonnegative integers $m,n$ with $m+n\geq 2d_k-1$ and is zero otherwise.
\end{proposition}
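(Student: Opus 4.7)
The plan is to split into cases based on whether $j=k$ and, in both, exploit the product concomitant formula of Lemma~\ref{lem:product concomitant} to reduce to a tractable computation. For $j\neq k$, factor $q(L)^2 = A_1 \cdot (L-a_j)^{2d_j}$ with $A_1 = q(L)^2/(L-a_j)^{2d_j}$, a self-adjoint polynomial in $L$. Applying Lemma~\ref{lem:product concomitant}, both boundary terms vanish: the first because $f^{(m)}(x+a_j) \in \ker((L-a_j)^{2d_j})$ by Lemma~\ref{lem:kernel basis}, the second because $A_1$ contains the factor $(L-a_k)^{2d_k}$, which annihilates $g^{(n)}(x+a_k)$. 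This matches the $\delta_{jk}$ factor on the right-hand side.

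For $j=k$, factor instead $q(L)^2 = B(L)(L-a_k)^{2d_k}$ with $B(z) := q(z)^2/(z-a_k)^{2d_k}$ a polynomial. By Lemma~\ref{lem:product concomitant}, the term involving $(L-a_k)^{2d_k} f^{(m)}(x+a_k)$ vanishes, leaving
\[
\mathcal{C}_{q(L)^2}(f^{(m)}(x+a_k), g^{(n)}(x+a_k)) = \mathcal{C}_{(L-a_k)^{2d_k}}\bigl(f^{(m)}(x+a_k),\, B(L)g^{(n)}(x+a_k)\bigr).
\]
The commutation $[L,\partial_x]=1$ yields the lowering relation $(L-a_k)^s g^{(n)}(x+a_k) = \tfrac{n!}{(n-s)!}g^{(n-s)}(x+a_k)$ for $s\le n$ (and zero otherwise), so a Taylor expansion of $B$ at $a_k$ expresses $B(L)g^{(n)}(x+a_k)$ as an explicit finite combination of $\{g^{(n-s)}(x+a_k)\}_{s=0}^n$ with coefficients involving $B^{(s)}(a_k)/s!$.

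The problem thus reduces to establishing the key lemma
\[
\mathcal{C}_{(L-a_k)^N}(f^{(m)}(x+a_k), g^{(p)}(x+a_k)) = m!\,p!\,W(f,g)\,\delta_{m+p,\,N-1}
\]
for $0\le m,p\le N-1$. The plan is to iterate Lemma~\ref{lem:product concomitant} along the factorization $(L-a_k)^N = (L-a_k)\cdot(L-a_k)^{N-1}$ to obtain, by induction on $N$,
\[
\mathcal{C}_{(L-a_k)^N}(\phi,\psi) = \sum_{i=0}^{N-1} \mathcal{C}_{L-a_k}\bigl((L-a_k)^{N-1-i}\phi,\,(L-a_k)^i\psi\bigr),
\]
and then substitute the lowering action on both arguments. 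With $S := m+p+1-N$, the result collapses (after a reindexing) to
\[
\frac{m!\,p!}{S!}\sum_{a=0}^S \binom{S}{a} W\bigl(f^{(a)}(x+a_k),\, g^{(S-a)}(x+a_k)\bigr),
\]
with the sum being empty when $S<0$.

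The main obstacle is recognizing the simplification of this last sum, and the key observation is that $W(f(x+a_k), g(x+a_k))$ is constant in $x$: since $f,g\in\{\Ai,\Bi\}$ both satisfy $Ly=a_k y$, the Wronskian of two solutions of a common second-order equation is constant. By the Leibniz rule the inner sum is exactly $\frac{d^S}{dx^S}W(f(x+a_k),g(x+a_k))$, which vanishes for $S\ge 1$ and equals $W(f,g)$ for $S=0$. This proves the key lemma. Substituting back into the Taylor expansion of $B(L)g^{(n)}$, exactly one term survives, corresponding to $s = m+n-2d_k+1$, which is a valid index in $\{0,\dots,n\}$ precisely when $m+n\ge 2d_k-1$; collecting factorials produces the stated formula, while the ``zero otherwise'' clause covers both the $j\ne k$ case and the range $m+n<2d_k-1$.
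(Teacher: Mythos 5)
Your proof is correct, and it takes a genuinely different route from the paper's. The paper reduces to the case $j=k$ in essentially the same way you do, but then it handles the reduction by absorbing $\partial_x^m$ into the operator, writing $\mathcal{C}_{h(L)}(f^{(m)},g^{(n)}) = \mathcal{C}_{h(L)\partial_x^m}(f,g^{(n)})$, repeatedly commuting $\partial_x$ past powers of $L-a_k$, and applying Lemma~\ref{lem:product concomitant} to obtain a double sum over indices $s,t$. This double sum is then collapsed using the alternating-sign binomial identity proved in the preceding combinatorial lemma, identifying the unique surviving term at $t = m+n-2d_k+1$. You instead isolate the clean intermediate statement
$$\mathcal{C}_{(L-a_k)^N}\bigl(f^{(m)}(x+a_k),\,g^{(p)}(x+a_k)\bigr) = m!\,p!\,W(f,g)\,\delta_{m+p,\,N-1},$$
prove it by telescoping Lemma~\ref{lem:product concomitant} along the factorization $(L-a_k)\cdot(L-a_k)^{N-1}$ into a single sum of Wronskians $\mathcal{C}_{L-a_k}=W$, and then dispose of the sum not by a combinatorial identity but by recognizing it as $\tfrac{m!p!}{S!}\partial_x^S W\bigl(f(x+a_k),g(x+a_k)\bigr)$, which vanishes for $S\geq 1$ because the Wronskian of two solutions of $Ly=a_ky$ is constant. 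The final assembly via the Taylor expansion of $B(L)=q(L)^2/(L-a_k)^{2d_k}$ at $a_k$ and the lowering relation is then a short bookkeeping step. The tradeoff is that the paper's argument is a self-contained chain of algebraic manipulations on the concomitant, while yours substitutes a small amount of analytic input (constancy of the Wronskian, already available in the paper from the differential Galois section) for the combinatorial lemma, making the appearance of the $\delta$-selection and the factorials more transparent. Both are valid; your version arguably makes clearer \emph{why} only one term survives.
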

\begin{proof}
For simplicity of notation, we will let $h(z) = q(z)^2$ and write $f$ and $g$ in place of $f(x+a_k)$ and $g(x+b_k)$, respectively.
First note that if $j\neq k$ then $f^{(m)}\in \ker((L-a_j)^{2d_j})$ and $g^{(n)}\in\ker(h(L)(L-a_j)^{-2d_j})$, which is the orthogonal complement of the subspace $\ker((L-a_j)^{2d_j})$ of $\ker(h(L))$.  Hence $\mathcal{C}_{h(L)}(f^{(m)},g^{(n)}) = 0$.
Thus it suffices to consider the case when $j=k$.

Let $\wt h(z) = h(z)/(z-a_k)^{2d_k}$.
Applying Lemma \ref{lem:product concomitant} and the fundamental relation $L\partial_x = \partial_xL + 1$ we see that
\begin{align*}
\mathcal{C}_{h(L)}(f^{(m)},g^{(n)})
  & = \mathcal{C}_{h(L)\partial_x^m}(f,g^{(n)})\\
  & = \mathcal{C}_{\wt h(L)(L-a_k)^{2d_k}\partial_x^m}(f,g^{(n)})\\
  & = \sum_{s=0}^m \binom{m}{s}\frac{(2d_k)!}{(2d_k-s)!} \mathcal{C}_{\wt h(L)\partial_x^{m-s}(L-a_k)^{2d_k-s}}(f,g^{(n)}).
\end{align*}
Now using the fact that the concomitant of $L$ is the Wronskian and again applying Lemma \ref{lem:product concomitant} and the more general relation
$$\wt h(L(x,\partial_x))\partial_x^m = \sum_{s=0}^m\binom{m}{s}\partial_x^{m-s}\wt h^{(s)}(L(x,\partial_x))$$
we see that
\begin{align*}
\mathcal{C}_{h(L)}&(f^{(m)},g^{(n)})\\
  & = \sum_{s=0}^m \binom{m}{s}\frac{(2d_k)!}{(2d_k-s)!}(-1)^{m-s} W(f,(L-a_k)^{2d_k-s-1}\partial_x^{m-s}\wt h(L)\partial_x^n\cdot g)\\
  & = \sum_{t=0}^n\binom{n}{t}\wt h^{(t)}(a_k)\sum_{s=0}^m \binom{m}{s}\frac{(2d_k)!}{(2d_k-s)!}(-1)^{m-s} W(f,(L-a_k)^{2d_k-s-1}\partial_x^{n+m-s-t}\cdot g).
\end{align*}
From this it is clear that if $n+m<2d_k-1$ then the concomitant is zero.  Thus without loss of generality we take $m+n\geq 2d_k-1$.  Then for $\ell = n+m-2d_k+1$
\begin{align*}
\mathcal{C}_{h(L)}&(f^{(m)},g^{(n)})\\
  & = \sum_{t=0}^n\binom{n}{t}\wt h^{(t)}(a_k)\sum_{s=0}^m \binom{m}{s}\frac{(2d_k)!}{(2d_k-s)!}(-1)^{m-s} W(f,(L-a_k)^{2d_k-s-1}\partial_x^{n+m-s-t}\cdot g)\\
  & = \sum_{t=0}^{n\wedge\ell}\binom{n}{t}\wt h^{(t)}(a_k)\sum_{s=0}^m \binom{m}{s}\frac{(2d_k)!}{(2d_k-s)!}(-1)^{m-s}\frac{(m+n-s-t)!}{(\ell-t)!} W(f,\partial_x^{\ell-t}\cdot g)\\
  & = \sum_{t=0}^{n\wedge\ell}\binom{n}{t}\wt h^{(t)}(a_k)\frac{m!(n-t)!}{(\ell-t)!}\sum_{s=0}^m \binom{2d_k}{s}\binom{m+n-s-t}{m-s}(-1)^{m-s} W(f,\partial_x^{\ell-t}\cdot g).
\end{align*}
Now reindexing the sum and applying the previous lemma, we obtain
\begin{align*}
\mathcal{C}_{h(L)}&(f^{(m)},g^{(n)})\\
  & = \sum_{t=0}^{n\wedge\ell}\binom{n}{t}\wt h^{(t)}(a_k)\frac{m!(n-t)!}{(\ell-t)!}\sum_{s=0}^m \binom{2d_k}{m-s}\binom{s+n-t}{s}(-1)^{s} W(f,\partial_x^{\ell-t}\cdot g)\\
  & = \sum_{t=0}^{n\wedge\ell}\binom{n}{t}\wt h^{(t)}(a_k)\frac{m!(n-t)!}{(\ell-t)!}\binom{2d_k-1-n+t}{m} W(f,\partial_x^{\ell-t}\cdot g).
\end{align*}
The binomial coefficient in the last sum is nonzero if and only if $\ell\leq t$.  Since the sum is taken between $t=0$ and $t=\ell$, the only nonzero term comes from when $t=\ell$.  Thus
$$\mathcal{C}_{h(L)}(f^{(m)},g^{(n)}) = \frac{m!n!}{(n+m-2d_k+1)!}\wt h^{(n+m-2d_k+1)}(a_k)W(f,g).$$
\end{proof}

The rational Darboux transformations of $\Ai(x+z)$ come directly from factorizations of the form \eqref{eqn:main factorization} with $P(x,\partial_x)$ having rational coefficients.  As we have outlined above, these correspond precisely to the Galois-invariant Lagrangian subspaces of $\ker(q(L)^2)$.
This characterization is made explicit in the next theorem.

\begin{theorem}[Classification Theorem]
Let  $f_m,g_m\in \ker(q(L)^2)$ for $1\leq m\leq d$ be $2d$ linearly independent functions of the form
$$f_i(x) = \sum_{m=0}^{2d_{\ell_i}-1} \alpha_{im} \Ai^{(m)}(x + a_{\ell_i}),\quad g_i(x) = \sum_{n=0}^{2d_{\ell_i}-1} \alpha_{in} \Bi^{(n)}(x + a_{\ell_i})$$
satisfying the condition that
$$\sum_{m+n\geq 2d_k-1}^{2d_k-1}\alpha_{im}\alpha_{jn}\frac{m!n!}{(m+n-2d_k+1)!}\left.\partial_z^{m+n-2d_k+1}\right\rvert_{z=a_k}\cdot\left(\frac{q(z)^2}{(z-a_k)^{2d_k}}\right)=0$$
for all $k$ and for all $i,j$ with $\ell_i=\ell_j=k$.
Then the differential operator $P(x,\partial_x)$ of order $2d$ defined in terms of a Wronskian by
$$P(x,\partial_x)\cdot f := W(f_1,f_2,\dots,f_d,g_1,g_2,\dots, g_d,f)$$
has rational coefficients and satisfies
$$P(x,\partial_x)^*\frac{1}{p(x)^2}P(x,\partial_x) = q(L(x,\partial_x))^2$$
for some rational function $p(x)$.  Furthermore every self-adjoint rational factorization of $q(L(x,\partial_x))^2$ is of this form.
\end{theorem}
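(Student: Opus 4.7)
The plan is to combine three pieces of machinery developed earlier in this section: the Lagrangian-subspace description of self-adjoint factorizations (Section 3.1), the differential Galois criterion for rationality (Section 3.2), and the explicit concomitant formula just established on $\ker(q(L)^2)$. By the Corollary to the Fundamental Theorem of Differential Galois Theory, self-adjoint rational factorizations $P^*(1/p(x)^2)P=q(L)^2$ correspond bijectively to $\DGal(F^q/\bbc(x))$-invariant Lagrangian subspaces $V\subseteq\ker(q(L)^2)$, and by Lemma \ref{lem:invariant subspaces} any such invariant subspace admits a basis of pairs
\[
\left(\sum_{j=0}^{2d_k-1}\alpha_{kj}\Ai^{(j)}(x+a_k),\ \sum_{j=0}^{2d_k-1}\alpha_{kj}\Bi^{(j)}(x+a_k)\right)
\]
each supported at a single root $a_k$, which is precisely the form of the pairs $(f_i,g_i)$ in the theorem.

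The next step is to identify the Lagrangian condition with the displayed concomitant-vanishing equation. Since $\dim\ker(q(L)^2)=4\deg q$, a Lagrangian subspace has dimension $2\deg q$, forcing $d=\deg q$. Plugging $f_i,g_i$ into the Proposition computing $\mathcal{C}_{q(L)^2}(f^{(m)}(x+a_j),g^{(n)}(x+a_k))$, and using the Wronskian identities $W(\Ai,\Ai)=W(\Bi,\Bi)=0$ and $W(\Ai,\Bi)=1/\pi$, one checks that the self-pairings $\mathcal{C}_{q(L)^2}(f_i,f_j)$ and $\mathcal{C}_{q(L)^2}(g_i,g_j)$ vanish automatically; the mixed pairings $\mathcal{C}_{q(L)^2}(f_i,g_j)$ with $\ell_i\ne\ell_j$ vanish by the Kronecker factor $\delta_{jk}$; and the mixed pairings with $\ell_i=\ell_j=k$ reduce, up to the nonzero constant $1/\pi$, to the sum displayed in the theorem. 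Hence $V=\vspan\{f_1,\dots,f_d,g_1,\dots,g_d\}$ is isotropic precisely when the stated vanishing conditions hold, and since $\dim V=2d$ equals half of $\dim\ker(q(L)^2)$, isotropic forces Lagrangian.

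To extract the operator $P$ itself, I would use the standard Wronskian presentation of differential factorizations: the subspace $V$ determines a monic factorization $q(L)^2=A_1A_2$ with $\ker(A_2)=V$, in which
\[
A_2\cdot f=\frac{W(f_1,\dots,f_d,g_1,\dots,g_d,f)}{W(f_1,\dots,f_d,g_1,\dots,g_d)}.
\]
Clearing this denominator produces the operator $P(x,\partial_x)$ of the theorem with leading coefficient $p(x)=W(f_1,\dots,f_d,g_1,\dots,g_d)$, so that $P=p(x)A_2$ and $P^*(1/p^2)P=A_2^*A_2$. The Lagrangian condition $V^\perp=V$ combined with Lemma \ref{lem:product concomitant} yields $\ker(A_1^*)=\ker(A_2)$, and matching orders and leading coefficients in the factorization $(q(L)^2)^*=A_2^*A_1^*$ then forces $A_1^*=A_2$; hence $q(L)^2=A_2^*A_2=P^*(1/p^2)P$. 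Rationality of the coefficients of $P$ and $p$ follows from Galois invariance of $V$ via the preceding theorem of Section 3.2. The converse is immediate: any self-adjoint rational factorization corresponds to a Galois-invariant Lagrangian $V$, Lemma \ref{lem:invariant subspaces} produces a basis of pairs $(f_i,g_i)$ in the stated form, and by the previous paragraph the Lagrangian condition reduces to the displayed sum.

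The main obstacle will be identifying the full pairwise concomitant-vanishing constraint on $V$ (a priori a system of $\binom{2d}{2}$ conditions) with the single sum displayed in the theorem; this reduction is driven essentially by the explicit Proposition for the concomitant on $\ker(q(L)^2)$ and by the vanishing Wronskians $W(\Ai,\Ai)=W(\Bi,\Bi)=0$. A secondary technical point is the passage from the monic Wronskian factor $A_2$ to the non-monic operator $P$: one must verify that clearing the leading coefficient $p(x)$ is compatible with the formal adjoint relation $A_1=A_2^*$ and that $p(x)$ is rational and nonvanishing as a function of $x$, which follows from linear independence of the $f_i,g_i$ together with Galois invariance of $V$.
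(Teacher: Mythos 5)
Your proposal is correct and follows the same route the paper sketches; the paper's own proof is a one-line remark that the theorem ``follows directly from our direct calculation of the concomitant along with our characterization of the Galois-invariant subspaces of the kernel,'' and your write-up simply supplies the details of exactly that derivation (passing through the Lagrangian/Galois correspondence, using Lemma \ref{lem:invariant subspaces} to get the paired basis, applying the concomitant Proposition with $W(\Ai,\Ai)=W(\Bi,\Bi)=0$ to reduce to the displayed sum, and extracting $P$ via the standard Wronskian presentation). The only point you gloss over slightly is rationality of $p(x)=W(f_1,\dots,f_d,g_1,\dots,g_d)$ itself: you invoke the Section 3.2 theorem, which gives rationality of the monic factor $A_2$ and of $A_1$, but rationality of the leading coefficient $p$ (equivalently of $P$ rather than $P/p$) requires the extra observation that the Wronskian of a basis of the $\DGal$-invariant space $V$ is $\DGal$-invariant because each $\sigma$ acts on the span of a pair $(f_i,g_i)$ through $\mathrm{SL}_2(\bbc)$ and hence with determinant $1$, so $p\in\bbc(x)$ by the fixed-field property.
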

\begin{proof}
This follows directly from our direct calculation of the concomitant along with our characterization of the Galois-invariant subspaces of the kernel.
\end{proof}

This result is particularly nice in the situation that $q(z) = (z-s_1)^d$, so that the concomitant has the simple form
$$\mathcal{C}_{q(L)^2}(f^{(m)}(x+s_1),g^{(n)}(x+s_1)) =
\left\lbrace\begin{array}{cc}
m!n!/\pi,& f = \Ai,\ g = \Bi,\ m+n = 2d-1\\
-m!n!/\pi,& f = \Bi,\ g = \Ai,\ m+n = 2d-1\\
0,& \text{otherwise}.
\end{array}\right.$$
The payout of our dive through all the differential Galois theory and symplectic geometry above is that we immediately provide \emph{explicit factorizations} of $(L(x,\partial_x)-s_1)^2$ and $(L(x,\partial_x)-s_1)^4$.
\begin{corollary}\label{lem:rank 1 classification}
Let $s_1\in\mathbb C$.
Then up to a function multiple, the only self-adjoint rational factorizations of $(L(x,\partial_x)-s_1)^2$ are the trivial one and
$$P_1(x,\partial_x)^*\frac{1}{(x+s_1)^2}P_1(x,\partial_x) = (L(x,\partial_x)-s_1)^2$$
for
$$P_1(x,\partial_x) = (x+s_1)\partial_x^2 - \partial_x - (x+s_1)^2.$$
\end{corollary}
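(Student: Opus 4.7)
The plan is to specialize the Classification Theorem to $q(z)=z-s_1$, so that $r=1$, $a_1=s_1$, $d_1=1$, and self-adjoint rational factorizations $P^*\tfrac{1}{p^2}P=(L-s_1)^2$ are in bijection with $\DGal$-invariant Lagrangian subspaces $V\subseteq\ker((L-s_1)^2)$. By Lemma \ref{lem:kernel basis}, $\ker((L-s_1)^2)$ is $4$-dimensional with basis $\Ai(x+s_1),\Ai'(x+s_1),\Bi(x+s_1),\Bi'(x+s_1)$, so each such $V$ has dimension $2$ and, by the $d=1$ case of the theorem, is spanned by a single pair
\[
f_1=\alpha\,\Ai(x+s_1)+\beta\,\Ai'(x+s_1),\qquad g_1=\alpha\,\Bi(x+s_1)+\beta\,\Bi'(x+s_1),
\]
with $(\alpha,\beta)\in\bbc^2\setminus\{0\}$ determined up to simultaneous rescaling (which corresponds to rescaling $P$ by a scalar, i.e.\ the ``function multiple'' of the statement).

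Next I would extract the Lagrangian condition using the simplified form of the concomitant recorded immediately before the corollary. For $d=1$ this formula is nonzero only for the pairings $\mathcal{C}(\Ai(x+s_1),\Bi'(x+s_1))=1/\pi$ and $\mathcal{C}(\Ai'(x+s_1),\Bi(x+s_1))=1/\pi$ (with the skew-symmetric counterparts), and all other pairings between basis elements vanish. Plugging in $f_1,g_1$ gives
\[
\mathcal{C}_{(L-s_1)^2}(f_1,g_1)=\frac{2\alpha\beta}{\pi},
\]
so the Lagrangian condition is exactly $\alpha\beta=0$, which yields precisely two one-parameter families of subspaces modulo scaling.

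Case $\beta=0$ gives $V=\mathrm{span}\{\Ai(x+s_1),\Bi(x+s_1)\}=\ker(L-s_1)$, and the Wronskian construction reproduces the trivial factorization $(L-s_1)^2=(L-s_1)\cdot 1\cdot (L-s_1)$ with $p(x)=1$. Case $\alpha=0$ gives $V=\mathrm{span}\{\Ai'(x+s_1),\Bi'(x+s_1)\}$, and I would compute $P\cdot f=W(\Ai'(x+s_1),\Bi'(x+s_1),f)$ by expanding the $3\times 3$ determinant, using $\Ai''(y)=y\Ai(y)$, $\Bi''(y)=y\Bi(y)$, and $W(\Ai,\Bi)=1/\pi$ to collapse it to
\[
P\cdot f=\tfrac{1}{\pi}\bigl[-(x+s_1)f''+f'+(x+s_1)^2 f\bigr],
\]
which coincides with $P_1$ up to the scalar $-\pi$. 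Since $P$ is intrinsically defined only up to a function multiple, this is the asserted nontrivial factorization. Finally, I would verify the identity $P_1^*\,(x+s_1)^{-2}\,P_1=(L-s_1)^2$ by direct expansion, using $P_1^*=(x+s_1)\partial_x^2+3\partial_x-(x+s_1)^2$ and the commutation rule $\partial_x\cdot(x+s_1)^{-k}=(x+s_1)^{-k}\partial_x-k(x+s_1)^{-k-1}$.

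The only real work is the Wronskian computation identifying $P_1$ and the symbolic check of the factorization identity; both are bounded-length calculations rather than conceptual difficulties. The conceptual content---that there are exactly two Galois-invariant Lagrangian subspaces---is entirely captured by the single equation $\alpha\beta=0$ coming from the concomitant.
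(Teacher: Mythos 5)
Your proof is correct and follows exactly the same approach as the paper's: specialize the Classification Theorem to $q(z)=z-s_1$ (so $d=1$), read off the single Lagrangian constraint $2\alpha\beta/\pi=0$ from the simplified concomitant formula, and identify the two resulting Wronskian operators with the trivial and nontrivial factorizations. The paper's own proof is just a terser rendering of the same argument, citing the constraint $2\alpha_{11}\alpha_{12}\cdot 1!1!/\pi=0$ directly from the Classification Theorem and presenting the two Wronskian expansions.
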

\begin{proof}
From the previous Theorem, we know we must choose functions
$$f_1(x) = \alpha_{11}\Ai(x+s_1) + \alpha_{12}\Ai'(x+s_1),\quad g_1(x) =  \alpha_{11}\Bi(x+s_1) + \alpha_{12}\Bi'(x+s_1)$$
satisfying $2\alpha_{11}\alpha_{12}1!1!/\pi = 0$.
Thus either $\alpha_{11}=0$ or $\alpha_{12}=0$ and without loss of generality we may take the remaining coefficient to be $\pi$.
In the first case, the operator $P(x,\partial_x)$ is
$$P(x,\partial_x)\cdot f = W(\Ai'(x+s_1),\Bi'(x+s_1),f) = (x+s_1)f''(x) - f'(x) - (x+s_1)^2f.$$
In the second case, the operator $P(x,\partial_x)$ is
$$P(x,\partial_x)\cdot f = W(\Ai(x+s_1),\Bi(x+s_1),f) = f''(x)-(x+s_1)f.$$
Thus in this second case $P(x,\partial_x) = L(x,\partial_x)-s_1$, giving us the trivial factorization of $(L(x,\partial_x)-s_1)^2$.
\end{proof}

\begin{corollary}\label{lem:rank 2 classification}
Let $s_1\in\mathbb C$.
Then up to a function multiple, the self-adjoint rational factorizations of $(L(x,\partial_x)-s_1)^4$ are of the form
$$P_2(x,\partial_x)^*\frac{1}{(x+s_1)^2}P_2(x,\partial_x) = (L(x,\partial_x)-s_1)^4$$
for
$$P_2(x,\partial_x)\cdot f = W(f_1,f_2,g_1,g_2,f),$$
where here
$$f_k(x) = \sum_{j=0}^3\alpha_{kj}\Ai^{(j)}(x+s_1),\quad g_k(x) =  \sum_{j=0}^3\alpha_{kj}\Bi^{(j)}(x+s_1)$$
for some constants $\alpha_{kj}$ satisfying the three relations
$$6\alpha_{m3}\alpha_{n0} + 2\alpha_{m1}\alpha_{n2} + 2\alpha_{m2}\alpha_{n1} + 6\alpha_{m3}\alpha_{n0} = 0,\ \ 1\leq m\leq n\leq 2.$$
\end{corollary}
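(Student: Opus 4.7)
The plan is to apply the preceding Classification Theorem directly to the special case $q(z) = (z-s_1)^2$, which has a single root $a_1 = s_1$ of multiplicity $d_1 = 2$, so that everything in the general theorem degenerates to a very explicit form.

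First I would specialize the concomitant formula established just before the corollary. Since $q(z)^2/(z-s_1)^{2d_1} \equiv 1$, only the $0$-th order derivative at $z = s_1$ is nonzero, forcing the index condition $m+n - 2d_1 + 1 = 0$, i.e.\ $m+n = 3$. Thus the only nonzero pairings among kernel basis elements are
$$\mathcal{C}_{(L-s_1)^4}(\Ai^{(m)}(x+s_1), \Bi^{(n)}(x+s_1)) = \frac{m!\,n!}{\pi}, \qquad m+n = 3,$$
and their sign-flipped counterparts, while every $\Ai$--$\Ai$ and $\Bi$--$\Bi$ pairing is zero. This is exactly the concomitant table displayed immediately before the corollary.

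Next I would combine this with Lemmas \ref{lem:kernel basis} and \ref{lem:invariant subspaces}: the kernel $\ker((L-s_1)^4)$ is $8$-dimensional, so a Lagrangian subspace has dimension $4$; and a Galois-invariant subspace is spanned by pairs, so a Galois-invariant Lagrangian subspace is spanned by exactly two pairs $(f_1, g_1), (f_2, g_2)$ of the prescribed form. The Lagrangian condition then collapses to requiring $\mathcal{C}(f_i, g_j) = 0$ for $1 \le i, j \le 2$, the pairings $\mathcal{C}(f_i, f_j)$ and $\mathcal{C}(g_i, g_j)$ being automatic. Expanding each such pairing bilinearly in the basis and using the concomitant table yields, up to the factor $1/\pi$, the four-term expression
$$6\alpha_{i0}\alpha_{j3} + 2\alpha_{i1}\alpha_{j2} + 2\alpha_{i2}\alpha_{j1} + 6\alpha_{i3}\alpha_{j0},$$
coming from $(m,n) \in \{(0,3), (1,2), (2,1), (3,0)\}$.

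Finally I would observe that this expression is symmetric under $(i,j) \mapsto (j,i)$: the two terms with coefficient $6$ swap with one another, as do the two terms with coefficient $2$. Hence the a priori four conditions reduce to the three independent conditions indexed by $1 \le m \le n \le 2$ stated in the corollary. The Classification Theorem then packages $P_2$ as the displayed Wronskian operator, and the identification of $p(x)$ with $(x+s_1)$ follows by the same leading-coefficient inspection used in Corollary \ref{lem:rank 1 classification}. The only real obstacle is clerical---tracking the symmetry that eliminates one of the four bilinear conditions---since all the substantive work has already been done by the Classification Theorem and the simplified concomitant formula.
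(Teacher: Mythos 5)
Your reduction to the Classification Theorem is correct and is the same route as the paper (whose proof is simply a one-line citation of that theorem). The specialization of the concomitant formula to $q(z)=(z-s_1)^2$, the dimension count ($8$-dimensional kernel, hence $4$-dimensional Lagrangian, hence two Galois-invariant pairs), and the reduction of the four pairings $\mathcal C(f_i,g_j)=0$ to three via the $(i,j)\leftrightarrow(j,i)$ symmetry are all carried out accurately, and you correctly read off the coefficients $6,2,2,6$ from $m!\,n!$ with $m+n=3$.

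However, your closing sentence --- that ``the identification of $p(x)$ with $(x+s_1)$ follows by the same leading-coefficient inspection used in Corollary~\ref{lem:rank 1 classification}'' --- is not correct, and the leading-coefficient inspection in fact gives the opposite conclusion. The Classification Theorem asserts $P_2^*\,p(x)^{-2}\,P_2=(L-s_1)^4$ where $p(x)$ is the leading coefficient of $P_2$, which for $P_2\cdot f=W(f_1,f_2,g_1,g_2,f)$ is the $4\times 4$ Wronskian $W(f_1,f_2,g_1,g_2)$. For a genuine level-two transformation this is a degree-$4$ polynomial in $x$, not $x+s_1$: the explicit operator \eqref{eqn:P2} has leading coefficient $x^4-4\alpha_{11}x^3+\tfrac{10}{3}\alpha_{11}^2 x^2+\cdots$, which becomes $p_2(x)=x(x^3+4)$ when $\alpha_{11}=0$ (this is exactly the $p_2(x)$ used later in the paper). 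The $(x+s_1)^2$ appearing in the corollary's displayed equation is a typo carried over from the level-one Corollary~\ref{lem:rank 1 classification}; a correct proof should note this and conclude with the Wronskian $W(f_1,f_2,g_1,g_2)$ in place of $x+s_1$ rather than attempting to justify the stated form.
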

\begin{proof}
This follows immediately from the Classification Theorem.
\end{proof}

The operator $P_2(x,\partial_x)$ in this latter situation is more complicated.
First of all, it features the factorizations from the previous corollary, as may be obtained from taking $\alpha_{13}=\alpha_{23}=0$.
Thus to get new factorizations, we can without loss of generality take $\alpha_{13}=\alpha_{23}=1$.
Then the three relations simplify to $\alpha_{m0} = -\alpha_{m1}\alpha_{m2}/3$ for $m=1,2$ plus a choice of either $\alpha_{11} = \alpha_{21}$ or $\alpha_{12} = \alpha_{22}$.
For sake of concreteness, we choose $\alpha_{11}=\alpha_{21}$ and take $\alpha_{22}=1$, $\alpha_{12}=0$, and $s_1=0$.
This determines all parameters, except for $\alpha_{11}$ and the associated operator $P(x,\partial_x)$ is explicitly computed to be
{\small\begin{align}\label{eqn:P2}
\nonumber P_2(x,\partial_x)
  & = \left(x^4 - 4x^3\alpha_{11} + \frac{10}{3}x^2\alpha_{11}^2 + \left(\frac{4}{3}\alpha_{11}^3 + 4\right)x + \frac{1}{9}\alpha_{11}^4-8\alpha_{11}\right)\partial_x^4\\\nonumber
  & + \left(-4x^3 + 12x^2\alpha_{11} - \frac{20}{3}\alpha_{11}^2x - \frac{4}{3}\alpha_{11}^3-4\right)\partial_x^3\\
  & + \left(-2x^5 + 8x^4\alpha_{11} - \frac{20}{3}\alpha_{11}^2x^3 - \left(\frac{8}{3}\alpha_{11}^3 + 2\right)x^2 - \left(\frac{2}{9}\alpha_{11}^4 - 4\alpha_{11}\right)x + \frac{10}{3}\alpha_{11}^2\right)\partial_x^2\\\nonumber
  & + \left(2x^4 - 4x^3\alpha_{11} - \frac{4}{3}x\alpha_{11}^3 - 16 - \frac{2}{9}\alpha_{11}^4 + 36\alpha_{11}\right)\partial_x\\\nonumber
  & + x^6 - 4x^5\alpha_{11} + \frac{10}{3}x^4\alpha_{11}^2 + \left(\frac{4}{3}\alpha_{11}^3 + 8\right)x^3 + \left(\frac{1}{9}\alpha_{11}^4-22\alpha_{11}\right)x^2+\frac{16}{3}x\alpha_{11}^2 + 2\alpha_{11}^3 + 16\nonumber
\end{align}
}

\section{Commuting differential operators for the level one kernels}

In this section, we explore the commuting differential operators for integral operators with \vocab{level one Airy kernels}, ie. those defined by bispectral functions $\Psi$ obtained from self-adjoint rational Darboux transformations of $(L(x,\partial_x)-s_1)^2$.
There is only one such bispectral function, determined by the factorization of $(L(x,\partial_x)-s_1)^2$ in Corollary \ref{lem:rank 1 classification}.  Using the operator $P_1(x,\partial_x)$ described in this Corollary, the associated bispectral function is
$$\Psi_1(x,z) = \frac{1}{(x+s_1)(z-s_1)}P_1(x,\partial_x)\cdot\Psi_{\Ai}(x,z) = \Ai(x+z) - \frac{1}{(x+s_1)(z-s_1)}\Ai'(x+z).$$
Let $\wt{P}_1(z,\partial_z) = b_{\Psi_{\Ai}}(P_1(x,\partial_x))$, $p_1(x) = x+s_1$ and $q_1(z)=z-s_1$.  For every $R(x,\partial_x)\in \mathfrak{F}_x(\Psi_{\Ai})$ and $S(z,\partial_z) = b_{\Psi_{\Ai}}(R(x,\partial_x))$, we have the identities
\begin{equation}
\frac{1}{p_1(x)}P_1(x,\partial_x)^*R(x,\partial_x)P_1(x,\partial_x)\frac{1}{p_1(x)}\cdot \Psi_1(x,z) = (q_1(z))S(z,\partial_z)(q_1(z))\cdot \Psi_1(x,z),
\end{equation}
\begin{equation}\frac{1}{q_1(z)}\wt{P}_1(z,\partial_z)^*S(z,\partial_z)\wt{P}_1(z,\partial_z)\frac{1}{q_1(z)}\cdot \Psi_1(x,z) = (p_1(x))R(x,\partial_x)(p_1(x))\cdot \Psi_1(x,z),
\end{equation}
and the more complicated identity
\begin{align}\label{eqn:complicated}
& \left(\frac{1}{p_1(x)}P_1(x,\partial_x)R(x,\partial_x)p(x) + p(x)R(x,\partial_x)^*P_1(x,\partial_x)^*\frac{1}{p_1(x)}\right)\cdot\Psi_1(x,z)\\\nonumber
& = \left(\frac{1}{q_1(z)}\wt{P}_1(z,\partial_z)S(z,\partial_z)q(z) + q(z)S(z,\partial_z)^*\wt{P}_1(z,\partial_z)^*\frac{1}{q_1(z)}\right)\cdot\Psi_1(x,z)
\end{align}
Comparing the orders of these operators, we see that $\mathfrak{F}_{x,\sym}^{2\ell,2m}(\Psi_1)$ contains the direct sum
\begin{align*}
\mathfrak{F}_{x,\sym}^{2\ell,2m}(\Psi_1)
  & \supseteq \frac{1}{p_1(x)}P_1(x,\partial_x)^*\mathfrak F_{x,\sym}^{2\ell-4,2m}(\Psi_{\Ai})P_1(x,\partial_x)\frac{1}{p_1(x)}\\
  & \oplus p_1(x)\mathfrak F_{x,\sym}^{2,2m-4}(\Psi_{\Ai}) p_1(x)\oplus \mathfrak E\oplus \mathbb C
\end{align*}
for all $\ell,m\geq 2$, where here $\mathfrak E$ is a set of additional operators stemming from the equation \eqref{eqn:complicated}
$$\mathfrak E = \left\lbrace\frac{1}{p_1(x)}P_1(x,\partial_x)R(x,\partial_x)p(x) + p(x)R(x,\partial_x)^*P_1(x,\partial_x)^*\frac{1}{p_1(x)}: R(x,\partial_x)\in\mathfrak F^{1,1}_x(\Psi_{\Ai})\right\rbrace.$$
Explicit calculation shows that $\mathfrak E$ is two dimensional.
Consequently the dimension of $\mathfrak{F}_{x,\sym}^{2\ell,2m}(\Psi_1)$ is at least $(\ell-1)(m+1) + 2(m-1) + 2 + 1 = (\ell+1)(m+1)-1$.
One can show that this is precisely the dimension for all $m,n>1$ and that both $\mathfrak F_x(\Psi)$ and $\mathfrak F_z(\psi)$ are equal to algebras of differential operators on a rational curve with a cusipdal singularity of degree $2$ at the origin.

Let $T_1$ be the integral operator
$$T_1: f(z)\mapsto \int_{t_1}^\infty K_1(z,w)f(w)dw,\quad K_1(z,w) = \int_{t_2}^\infty \Psi_1(x,z)\Psi_1(x,w)dx.$$
The specific value of the kernel $K_1(z,w)$ is determined via integration by parts to be
$$K_1(z,w) = \frac{q_1(w)}{q_1(z)}K_{\Ai}(z,w) + \mathcal{C}_{P_1}(\psi_{\Ai}(x,z),\psi_1(x,w)/p_1(x);t_2).$$
From the previous estimate of the dimension of $\mathfrak{F}_{x,\sym}^{2\ell,2m}(\Psi_1)$, we see that $T_1$ will commute with a differential operator $S_1(z,\partial_z)$ in $\mathfrak F_{z,\sym}^{4,4}(\Psi_1)$.

The values of the commuting integral and differential operators will in general depend on $s_1$, albeit predictably.
If we make the $s_1$-dependence of $\Psi(x,z)=\Psi(x,z;s_1)$ explicit, we see $\Psi(x,z;s_1)=\Psi(x+s_1,z-s_1;0)$ and consequently the differential operator $S_1(z,\partial_z)$ commuting with $T_1$ for arbitrary $s_1$ is the same as in the case $s_1=0$, but with $z$ replaced by $z-s_1$ and $t_2$ replaced by $t_2+s_1$.
Thus without loss of generality we will take $s_1=0$.

Explicitly computing the condition of the vanishing of the concomitant and solving the resulting linear system of equations yields the operator of order $4$
$$S_1(z,\partial_z) = \frac{1}{z}\left(\sum_{k=0}^2\partial_z^ka_k(z)(z-t_1)^k\partial_z^k\right)\frac{1}{z}$$
where here
\begin{align*}
a_2(z) &= z^2,\\
a_1(z) &= -2(z^4+(t_2-t_1)z^3 - 3t_1),\\
a_0(z) &= z^3(z^3 + 2(t_2-t_1)z^2 + (t_2-t_1)^2z - 8) + (t_1+t_2)z^2/3.
\end{align*}

The dimension estimates also imply the existence of a commuting differential operator of order $6$, which we find to be
$$\wt{S}_1(z,\partial_z) = \frac{1}{z}\left(\sum_{k=0}^3\partial_z^k\wt a_k(z)(z-t_1)^k\partial_z^k\right)\frac{1}{z}$$
where here
\begin{align*}
\wt a_3(z) =&\ z^2,\\
\wt a_2(z) =&\ -3(z^4+(t_2-t_1)z^3 - 4t_1),\\
\wt a_1(z) =&\ 3(z^6 + 2(t_2-t_1)z^5 + (t_2-t_1)^2z^4 - 10 z^3 + (5t_1-4t_2)z^2 - 3t_1(t_2-t_1)z),\\
\wt a_0(z) =&\ -z^8 - 3(t_2-t_1)z^7 -3(t_2-t_1)^2z^6 - ((t_2-t_1)^3 - 32)z^5\\
       &+ (42t_2-63t_1)z^4 + (36t_1^2-48t_1t_2+12t_2^2)z^3 + t_1t_2(t_1+t_2)z^2 +  12t_1^2 - 6t_1t_2.
\end{align*}
The operators $S_1(z,\partial_z)$ and ${\wt S}_1(z,\partial_z)$ commute and thus satisfy an algebraic relation.
The relation is
$$\wt{S}_1^2 = S_1^3 - \frac{t_1^2-t_1t_2+t_2^2}{3}S_1 + \frac{(t_1-2t_2)(2t_1-t_2)(t_1+t_2)}{3^3}.$$
The discriminant of the polynomial on the right hand side is
$$\Delta = -\frac{16}{27}(260t_1^6 - 780t_1^5t_2 - 627t_1^4t_2^2 + 2554t_1^3t_2^3 - 627t_1^2t_2^4 - 780t_1t_2^5 + 260t_2^6),$$
so for generic values of $t_1$ and $t_2$, the associated algebraic variety is an elliptic curve.

\section{Commuting differential operators for the level two kernels}

In this section, we explore the commuting differential operators for integral operators with \vocab{level two Airy kernels}, ie. those defined by bispectral functions $\Psi$ obtained from self-adjoint rational Darboux transformations of $(L(x,\partial_x)-s_1)^2(L(x,\partial_x)-s_2)^2$.  We will focus on the particular case when $s_1=s_2$, leaving the other situation to a future publication.
Note also that due to the nice translation behavior of $\psi_{\Ai}(x,z)$, we can easily rederive the formula for general values of $s_1$ from the case when $s_1=0$.  So for sake of simplicity, we will take $s_1=0$.

There are many bispectral functions in the level two case, all of which are determined by the factorizations of $L(x,\partial_x)^4$ in Corollary \ref{lem:rank 2 classification}, which in turn are determined by a choice of $\alpha_{jk}$ for $j=1,2$ and $0\leq k \leq 3$ satisfying the constraints of the Corollary.  The precise value $P_2(x,\partial_x)$ and the commuting operator is very complicated in general.  To facilitate our computations, and the inclusion of exact formulas in our paper, we will take $\alpha_{31}=\alpha_{32}=1$, $\alpha_{11}=\alpha_{21}$ and take $\alpha_{22}=1$, $\alpha_{12}=0$, so that $P_2(x,\partial_x)$ is given by \eqref{eqn:P2}.
Additionally we will take $\alpha_{11}=0$ so that $P_2(x,\partial_x)$ has the simplified formula
$$P_2(x,\partial_x) = x(x^3+4)\partial_x^4 - 4(x^3+1)\partial_x^3 - 2x^2(x^3+1)\partial_x^2+2x(x^3-8)\partial_x + x^6 + 8x^3 + 16.$$

Let $q_2(z)=z^2$ and $p_2(x) = x(x^3+4)$.
The corresponding bispectral function is defined by
\begin{align*}
\Psi_2(x,z)
  & = \frac{1}{p_2(x)q_2(z)}P_2(x,\partial_x)\cdot\Psi_{\Ai}(x,z)\\
  & = \Ai(x+z) + \frac{6(x^3 + x^2z + 2)}{p_2(x)q_2(z)}\Ai(x+z) - \frac{4(x^3w + 3x + w)}{p_2(x)q_2(z)}\Ai'(x+z).
\end{align*}

The Fourier algebras for $\Psi_2(x,z)$ are given by algebras of differential operators on some rank $1$, torsion-free modules over certain rational curves with cuspidal singularities.
Specifically, let $\mathscr A_x = \{f(x)\in\bbc[x]: p(x)|f'(x)\}$ be the coordinate ring of a singular rational curve $X$ with cusps of order $2$ at the roots of $p(x)$.  Then
$$\mathfrak F_x(\Psi_2) = \{D(x,\partial_x): D(x,\partial_x)\cdot\mathscr A_x\subseteq\mathscr A_x\}$$
is the algebra of differential operators on $X$.
Likewise, let $\mathscr A_z = \mathbb C[z^4,z^5]$ be the affine coordinate ring of a rational curve $Z$ with a higher-order cusp at $0$ and consider the torsion-free rank $1$ $\mathscr A_z$-module $\mathscr M_z = \vspan_{\mathbb C}\{z^{-2},z^{-1}\}\oplus z^2\mathbb C[z]$.
Then 
$$\mathfrak F_z(\Psi_2) = \{D(z,\partial_z): D(z,\partial_z)\cdot\mathscr M_z\subseteq\mathscr M_z\}$$
is the algebra of differential operators on the line bundle $\mathcal L$ over $Z$ associated to $\mathscr M_z$.

The generalized Fourier map $b_{\Psi}$ may be described in terms of $b_{\Psi_{\Ai}}$ by
$$b_{\Psi}(A(x,\partial_x)) = \frac{1}{q_2(z)}b_{\Psi_{\Ai}}\left[P_2(x,\partial_x)^*\frac{1}{p_2(x)}A(x,\partial_x)\frac{1}{p_2(x)}P_2(x,\partial_x)\right]\frac{1}{q_2(z)}.$$

Let $T_2$ be the integral operator
$$T_2: f(z)\mapsto \int_{t_1}^\infty K_2(z,w)f(w)dw,\quad K_2(z,w) = \int_{t_2}^\infty \Psi_2(x,z)\Psi_2(x,w)dx.$$
The specific value of the kernel $K_2(z,w)$ is determined via integration by parts to be
$$K_2(z,w) = \frac{q_2(w)}{q_2(z)}K_{\Ai}(z,w) + \mathcal{C}_{P_2}(\psi_{\Ai}(x,z),\psi_2(x,w)/p_2(x);t_2).$$
Computer calculation finds $\dim \mathfrak{F}_{x,\sym}^{10,10}(\Psi_2) = 32$, and therefore $T_2$ will commute with a differential operator $S_2(z,\partial_z)$ in $\mathfrak F_{z,\sym}^{10,10}(\Psi_2)$.

Taking $t_1=t_2=1$, and solving the linear system describing the vanishing of the concomitants, we find differential operators of order $10$, $12$, $14$, $16$, and $18$ commuting with $T_2$.
The operators $S_2(z,\partial_z)$ and $\wt S_2(z,\partial_z)$ of order $10$ and $12$ are given by
$$S_2(z,\partial_z) = \frac{1}{z^2}\left(\sum_{k=0}^{5}\partial_z^k(1-z)^ka_k(z)\partial_z^k\right)\frac{1}{z^2},$$
\begin{align*}
a_0(z) &= z^{14}-200z^{11} + 170z^{10} + 5640z^8 - 7360z^7 + 2160 z^6 - 11520 z^5 - 2880 z + 4320,\\
a_1(z) &= 5z^{12} - 580 z^9 + 380 z^8 + 6240 z^6 - 3700 z^5 - 960z^2 - 9600z +4800,\\
a_2(z) &= 10z^{10} - 560 z^7 + 180z^6 + 960 z^4 + 1800 z^3 + 300z^2,\\
a_3(z) &= 10z^8 - 180z^5 - 100z^4 - 420z + 1260,\\
a_4(z) &= 5z^6 - 70z^2,\\
a_5(z) &= z^4;
\end{align*}
$$\wt S_2(z,\partial_z) = \frac{1}{z^2}\left(\sum_{k=0}^{6}\partial_z^k(1-z)^k\wt a_k(z)\partial_z^k\right)\frac{1}{z^2},$$
\begin{align*}
\wt a_0(z) &= z^{16} - 340 z^{13}   + 504 z^{12}   + 21040 z^{10}   - 52200 z^9  + 28812 z^8\\
       &\ - 192000 z^7 + 490464 z^6  - 328320 z^5  - 201600 z + 130464,\\
\wt a_1(z) &= 6 (z^{14} - 220 z^{11}   + 300 z^{10}   + 7000 z^8  - 14212 z^7  + 5148 z^6 \\
       &\ - 16800 z^5  + 13568 z^4 + 13568 z^3  + 2368 z^2  - 6240 z + 12480,\\
\wt a_2(z) &= 3(5z^{12}   - 640 z^9  + 760 z^8  + 7800 z^6  - 8792 z^5  - 2996 z^4  - 3120 z^2  - 36000 z + 50400,\\
\wt a_3(z) &= 4z^2(5z^8 - 310z^5  + 270 z^4  + 600 z^2  + 1566 z - 2268),\\
\wt a_4(z) &= 3(5z^8-100z^5 -224z + 784),\\
\wt a_5(z) &= 6(z-2)z^2(z+2)(z^2+4),\\
\wt a_6(z) &= z^4.
\end{align*}

From Burchnall-Chaundy Theory and its extensions (see for example \cite{Krichever2}), we know that each pair of operators must satisfy a polynomial relation.  Together, the algebra they generate is the coordinate ring of an affine curve.
However, the precise relations that are satisfied are sufficiently complicated so as to be omitted from the paper.

{\bf Acknowledgements.}
The research of W.R.C. was supported by CSU Fullerton RSCA intramural grant 0359121.
The research of M.Y. was supported by NSF grant DMS-2131243.
The research of I.Z. was supported by CONICET PIP grant 112-200801-01533.

\bibliographystyle{plain}

\end{document}